\newtheorem{theorem}{Theorem}
\newtheorem{property}[theorem]{Property}
\newtheorem{lemma}[theorem]{Lemma}
\newtheorem{corollary}[theorem]{Corollary}
\newtheorem{example}[theorem]{Example}
\newtheorem{definition}[theorem]{Definition}
\def\di{\displaystyle}
\def\eps{\varepsilon}
\def\ie{\textit{i.e.} }
\def\({\left(} 
\def\){\right)}
\newcommand{\N}{\mathbb{N}}
\newcommand{\R}{\mathbb{R}}
\newcommand{\LL}{\mathcal{L}}
\newcommand{\CC}{\mathscr{C}}
\newcommand{\cDM}{{}_{{\rm c}} D^\alpha_-}
\newcommand{\cDP}{{}_{{\rm c}} D^\alpha_+}
\newcommand{\DM}{D^\alpha_-}
\newcommand{\DP}{D^\alpha_+}
\newcommand{\fonction}[5]{\begin{array}[t]{lrcl}#1 :&#2 &\longrightarrow &#3\\&#4& \longmapsto &#5 \end{array}}
\newcommand{\fonctionsansdef}[3]{\begin{array}[t]{lrcl}#1 :&#2 &\longrightarrow &#3 \end{array}}
\newcommand{\abs}[1]{\left\vert #1 \right\vert} 
\DeclareMathOperator*{\argmin}{arg\,min}
\begin{document}
\title[]{\textbf{A class of fractional optimal control problems and fractional Pontryagin's systems. Existence of a fractional Noether's theorem}}
\author{Lo\"ic Bourdin}
\address{Laboratoire de Math\'ematiques et de leurs Applications - Pau (LMAP). UMR CNRS 5142. Universit\'e de Pau et des Pays de l'Adour.}
\email{bourdin.l@etud.univ-pau.fr}
\maketitle

\begin{abstract}
In this paper, we study a class of fractional optimal control problems. A necessary condition for the existence of an optimal control is provided in \cite{agra2,agra3,torr,torr2,jeli} and references therein. It is commonly given as the existence of a solution of a fractional Pontryagin's system and the proof is based on the introduction of a Lagrange multiplier. Assuming an additional condition on these problems, we suggest a new presentation of this result with a proof using only classical mathematical tools adapted to the fractional case: \textit{calculus of variations}, \textit{Gronwall's Lemma}, \textit{Cauchy-Lipschitz Theorem} and \textit{stability under perturbations of differential equations}.

In this paper, we furthermore provide a way in order to transit from a classical optimal control problem to its fractional version via the Stanislavsky's formalism. We also solve a strict fractional example allowing to test numerical schemes. Finally, we state a fractional Noether's theorem giving the existence of an explicit constant of motion for fractional Pontryagin's systems admitting a symmetry.
\end{abstract}

\textbf{\textrm{Keywords:}} Optimal control; fractional calculus; Noether's theorem.

\textbf{\textrm{AMS Classification:}} 26A33; 49J15.

\section*{Introduction}
The control theory is the analysis of controlled dynamical systems. These systems are varied: they can be differential, stochastic or discrete. The optimal control theory concerns the determination of a control optimizing a certain cost. Consequently, this theory is strongly connected to the 18${}^{\text{th}}$ century classical mechanic (variational principles, Euler-Lagrange equations, etc., see \cite{arno,gamk,mart}). Since the second world war, this theory has a considerable development and one can find applications in many domains: celestial mechanic \cite{trel2}, biology \cite{burl}, hydroelectricity \cite{cant}, economy \cite{dema,dmit,fedo}, etc. The subject is widely treated and one can find a lot of references on the subject, see for example \cite{brys,evan,hest,trel}. 

The fractional calculus, \textit{i.e.} the mathematical field dealing with the generalization of the derivative to any real order, plays an increasing role in many varied domains as economy \cite{comt} or probability \cite{levy,stan}. Fractional derivatives also appear in many fields of Physics (see \cite{hilf3}):  wave mechanic \cite{alme}, viscoelasticity \cite{bagl}, thermodynamics \cite{hilf2}, fluid mechanic in heterogeneous media \cite{hilf,neel,neel2}, etc. A natural question then arises: \textit{can we develop optimal control theories for fractional differential systems?}

Recently, a subtopic of the fractional calculus gains importance: it concerns the variational principles on functionals involving fractional derivatives. This leads to the statement of fractional Euler-Lagrange equations, see \cite{agra,bale2,riew}. A direct consequence is the emergence of works concerning a particular class of fractional optimal control problems, see \cite{agra2,agra3,torr,torr2,jeli} and references therein. These studies usually use the Lagrange multiplier technique allowing to write these problems as problems of optimization without constraint of \textit{augmented functionals}. With a calculus of variations, authors then obtain a necessary condition for the existence of an optimal control. This condition is commonly given as the existence of a solution of a system of fractional differential equations called \textit{fractional Pontryagin's system}. \\

In this paper, we first give a new presentation of this result. Precisely, making an additional assumption (see Condition \eqref{condf}), we rewrite \textit{directly} these fractional optimal control problems as simpler problems of optimization without constraint of functionals \textit{depending only on the control}. With a calculus of variations, we finally conclude to the same necessary condition but with a new presentation. Although the method here used is considerably inspired by the Lagrange multiplier technique, it allows us to give a complete proof using only classical mathematical tools adapted to the fractional case: \textit{calculus of variations}, \textit{Gronwall's Lemma}, \textit{Cauchy-Lipschitz Theorem} and \textit{stability under perturbations of differential equations}. 

Nevertheless, the explicit computation of controls satisfying the above necessary condition needs the resolution of the fractional Pontryagin's system which is a main drawback. Indeed, solving a fractional differential equation is in general very difficult. Consequently, in this paper, we suggest a deviously way in order to get informations on the solutions of a fractional Pontryagin's system. Precisely, we study the existence of classical conservation laws, \textit{i.e.} functions which are constant on each solution. Indeed, constants of motion, generally associated to physical quantities, give strong informations on the solutions in the phase space for example. Moreover, they also can be used in order to reduce or integrate the system by quadrature. Previous results in this direction have been obtained by Torres and Frederico in \cite{torr,torr2}. However, in each of these papers, the conservation law is not explicit but implicitly defined by a functional relation. In this paper, inspired by a recent result obtained in \cite{bour2}, we prove a fractional Noether's theorem providing an explicit conservation law for fractional Pontryagin's systems exhibiting a symmetry. 

In this paper, we also suggest:
\begin{itemize}
\item a link between classical optimal control problems and their fractional versions via the Stanislavsky's formalism (see \cite{iniz,stan});
\item a solved fractional example allowing to test numerical schemes in the strict fractional case.
\end{itemize}
~\\
The paper is organized as follows. Firstly, we remind some classical definitions and results concerning fractional calculus (Section \ref{section1}). Then, we present the class of fractional optimal control problems studied and the usual strategy using the Lagrange multiplier (Section \ref{section21}). Then, assuming an additional condition on the problem, we develop a new proof leading to the same result with a different presentation, see Section \ref{section22}. We then suggest a way in order to transit from a classical optimal control problem to its fractional version via the Stanislavsky's formalism (Section \ref{section3}). Subsequently, we suggest some examples with, in particular, a solved fractional example allowing to test numerical schemes in the strict fractional case (Section \ref{section4}). Finally, in Section \ref{section5}, we state a fractional Noether's theorem for fractional Pontryagin's systems admitting a symmetry. Technical proofs of Lemmas are provided in Appendix \ref{appA}.

\parindent 0pt

\section{Usual definitions and results about fractional calculus}\label{section1}
Let us introduce the following notations available in the whole paper. Let $a < b$ be two reals, let $d$, $m \in \N^*$ denote two dimensions and let $\Vert \cdot \Vert$ be the euclidean norm of $\R^d$ and $\R^m$.

\subsection{Fractional operators of Riemann-Liouville and Caputo}\label{section11}
The fractional calculus concerns the extension of the usual notion of derivative from non-negative integer orders to any real order. Since 1695, numerous notions of fractional derivatives emerge over the year, see \cite{kilb,podl,samk}. In this paper, we only use the notions of fractional integrals and derivatives in the sense of Riemann-Liouville (1847) and Caputo (1967) whose definitions are recalled in this section. We refer to \cite{kilb,podl,samk} for more details. \\

Let $g \in \CC^0 ([a,b],\R^d)$ and $\alpha > 0$. The left (resp. right) fractional integral in the sense of Riemann-Liouville with inferior limit $a$ (resp. superior limit $b$) of order $ \alpha $ of $g$ is defined by:
\begin{equation}
\forall t \in ]a,b], \; I^{\alpha}_- g (t) := \dfrac{1}{\Gamma (\alpha)} \di \int_a^t (t-y)^{\alpha -1} g(y) \; dy
\end{equation}
respectively:
\begin{equation}
\forall t \in [a,b[, \; I^{\alpha}_+ g (t) := \dfrac{1}{\Gamma (\alpha)} \di \int_t^b (y-t)^{\alpha -1} g(y) \; dy,
\end{equation}
where $\Gamma$ denotes the Euler's Gamma function. Let us remind that $I^\alpha_- g$ (resp. $I^\alpha_+ g$) is continuous and can be continuously extended by $0$ in $t=a$ (resp. $t=b$). Let us note that $I^1_- g$ (resp. $-I^1_+ g$) coincides with the anti-derivative of $g$ vanishing in $t=a$ (resp. $t=b$). For $\alpha =0$, let $I^0_- g = I^0_+ g = g$. \\

Now, let us consider $0 < \alpha \leq 1$. The left (resp. right) fractional derivative in the sense of Riemann-Liouville with inferior limit $a$ (resp. superior limit $b$) of order $\alpha $ of $g$ is then given by:
\begin{equation}
\forall t \in ]a,b], \; \DM g(t) := \dfrac{d}{dt} \big( I^{1-\alpha}_- g \big) (t) \quad \Big( \text{resp.} \quad \forall t \in [a,b[, \; \DP g(t) := -\dfrac{d}{dt} \big( I^{1-\alpha}_+ g \big) (t) \Big),
\end{equation}
provided that the right side terms are defined. \\

In the Riemann-Liouville sense, the strict fractional derivative of a constant is not zero. Caputo then suggests the following definition. For $0 < \alpha \leq 1$, the left (resp. right) fractional derivative in the sense of Caputo with inferior limit $a$ (resp. superior limit $b$) of order $\alpha $ of $g$ is given by:
\begin{equation}
\forall t \in ]a,b], \; \cDM g(t) := \DM \big( g - g(a) \big) (t) \quad \Big( \text{resp.} \quad \forall t \in [a,b[, \; \cDP g(t) := \DP \big( g - g(b) \big) (t) \Big),
\end{equation}
provided that the right side terms are defined. Let us note that if $g(a)=0$ (resp. $g(b)=0$), then $\cDM g = \DM g$ (resp. $\cDP g =  \DP g$).\\

In the \textit{classical case} $\alpha = 1$, the fractional derivatives of Riemann-Liouville and Caputo both coincide with the classical derivative. Precisely, modulo a $(-1)$ term in the right case, we have $D^1_- = {}_{\text{c}}D^1_- = - D^1_+ = - {}_{\text{c}}D^1_+ = d/dt$. \\

Finally, let us remind the following important result. If $g \in \CC^1 ([a,b],\R^d)$ and $0 < \alpha \leq 1$, then we have:
\begin{equation}\label{eq11-1}
\forall t \in ]a,b], \; \cDM g(t) =  I^{1-\alpha}_- \dot{g} (t) \quad \text{and} \quad \forall t \in [a,b[, \; \cDP g(t) = - I^{1-\alpha}_+ \dot{g} (t) ,
\end{equation}
where $\dot{g} $ denotes the derivative of $g$. Consequently, in this case, we have $\cDM g$ (resp. $\cDP g$) is continuous and can be continuously extended in $t=a$ (resp. $t=b$). 

\subsection{Some properties and results about the fractional operators}\label{section12}
Let us precise that the properties and results developed in this section are well-known and already proved in many references, see \cite{kilb,samk} for example. However, these results are not always exactly presented as we need. In this case, we give a detailed proof for the reader's convenience. \\

First, we remind two basic results concerning the fractional integrals. They are proved in \cite{kilb,samk} both using the Fubini's theorem. The first one yields the \textit{semi-group property} of the fractional integral operators:

\begin{property}\label{prop1}
Let $g \in \CC^0 ([a,b],\R^d)$ and $\alpha_1$, $\alpha_2 \geq 0$. Then, we have:
\begin{equation}
\forall t \in [a,b], \; I^{\alpha_1}_- \circ I^{\alpha_2}_- g (t) = I^{\alpha_1 + \alpha_2}_- g(t) \quad \text{and} \quad I^{\alpha_1}_+ \circ I^{\alpha_2}_+ g (t) = I^{\alpha_1 + \alpha_2}_+ g(t).
\end{equation}
\end{property}

The following second property is occasionally called \textit{fractional integration by parts}. It is very useful for calculus of variations involving fractional derivatives:

\begin{property}\label{prop2}
For any $g_1$, $g_2 \in \CC^0 ([a,b],\R^d)$ and any $\alpha \geq 0$, we have:
\begin{equation}
\di \int_a^b I^\alpha_- g_1 \cdot g_2 \; dt = \di \int_a^b  g_1 \cdot I^\alpha_+ g_2 \; dt.
\end{equation}
\end{property}

Let us introduce the following convention: a function defined on $]a,b]$ (resp. $[a,b[$) is said to be an element of $\CC^0 ([a,b],\R^d)$ if and only if it is continuous on $]a,b]$ (resp. $[a,b[$) and if it can be continuously extended in $t=a$ (resp. $t=b$). From Section \ref{section11}, we can give the following examples:
\begin{itemize}
\item for $g \in \CC^0 ([a,b],\R^d)$ and $\alpha \geq 0$, we have $I^\alpha_- g$, $I^\alpha_+ g \in \CC^0 ([a,b],\R^d)$;
\item for $g \in \CC^1 ([a,b],\R^d)$ and $0 < \alpha \leq 1$, we have $\cDM g$, $\cDP g \in \CC^0 ([a,b],\R^d)$.
\end{itemize}
Now, we prove some results of composition between the left fractional operators. One can easily derive the analogous versions for the right ones. 

\begin{property}\label{prop3}
Let $g \in \CC^0 ([a,b],\R^d)$ and $0 < \alpha \leq 1$. Then, $\cDM \circ I^\alpha_- g $ is an element of $\CC^0 ([a,b],\R^d)$ and for any $t \in [a,b] $, we have:
\begin{equation}
\cDM \circ I^\alpha_- g(t) = g(t).
\end{equation}
Let us assume additionally that $\cDM g \in \CC^0 ([a,b],\R^d)$. Then, $ I^\alpha_- \circ \cDM g $ is an element of $\CC^0 ([a,b],\R^d)$ and for any $t \in [a,b]$, we have:
\begin{equation}
I^\alpha_- \circ \cDM g (t) = g(t)-g(a).
\end{equation}
\end{property}

\begin{proof}
Let us prove the first result. Since $I^\alpha_- g (a) =0$, we have for any $t \in ]a,b]$:
\begin{equation}
\cDM \circ I^\alpha_- g(t) = \DM \circ I^\alpha_- g(t) = \dfrac{d}{dt} \circ I^{1-\alpha}_- \circ I^\alpha_- g (t) = \dfrac{d}{dt} \big( I^1_- g \big) (t) = g(t).
\end{equation}
Hence, $\cDM \circ I^\alpha_- g $ is continuous on $]a,b]$ and can be continuously extended by $g(a)$ in $t=a$. Now, let us prove the second result. It is obvious for $\alpha =1$. Now, let us consider $0 < \alpha < 1$. Since $\cDM g \in \CC^0 ([a,b],\R^d)$, we have $I^{1-\alpha}_- \big( g - g(a) \big) \in \CC^1 ([a,b],\R^d)$. Combining the first result and Equality \eqref{eq11-1}, we have for any $t \in ]a,b]$:
\begin{multline}
g(t)-g(a) = {}_{\text{c}} D^{1-\alpha}_- \circ I^{1-\alpha}_- \big( g - g(a) \big) (t) = I^\alpha_- \circ \dfrac{d}{dt} \circ I^{1-\alpha}_- \big( g - g(a) \big) (t) \\ = I^\alpha_- \circ \DM \big( g - g(a) \big) (t) = I^\alpha_- \circ \cDM g (t).
\end{multline}
Hence $ I^\alpha_- \circ \cDM g $ is continuous on $]a,b]$ and can be continuously extended by $0$ in $t=a$. The proof is completed.
\end{proof}

Finally, we prove the following fractional Cauchy-Lipschitz type theorem. Let us precise that a version of this theorem is proved in \cite[Part 3.5.1, Corollary 3.26, p.205]{kilb}. It gives the existence and the uniqueness of a global solution of a fractional differential equation.
\begin{theorem}[Fractional Cauchy-Lipschitz theorem]\label{thmfcl}
Let $0 < \alpha \leq 1$, $F \in \CC^0(\R^d \times [a,b], \R^d)$ and $A \in \R^d$. Let us assume that $F$ satisfies the following Lipschitz type condition:
\begin{equation}
\exists K \in \R, \; \forall (x_1,x_2,t) \in (\R^d)^2 \times [a,b], \; \Vert F(x_1,t)-F(x_2,t) \Vert \leq K \Vert x_1 - x_2 \Vert .
\end{equation}
Then, the following fractional Cauchy problem:
\begin{equation}\label{eqcp}
 \left\lbrace \begin{array}{l}
 		\cDM g = F(g,t) \\
 		g(a) = A,
        \end{array}
\right.
\end{equation}
has an unique solution in $\CC^0 ([a,b],\R^d)$. The solution $g$ is an element of $ \CC^{[\alpha]} ([a,b],\R^d)$ (where $[\alpha]$ denotes the floor of $\alpha$) and $g$ satisfies:
\begin{equation}\label{eqcp2}
\forall t \in [a,b], \; g(t) = A + I^\alpha_- \big( F(g,t) \big) (t).
\end{equation}
\end{theorem}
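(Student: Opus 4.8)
The plan is to recast the fractional Cauchy problem \eqref{eqcp} as an equivalent fixed-point equation and then run a Banach fixed-point argument adapted to the fractional setting, mimicking the classical Picard--Lindel\"of scheme with the Gamma function playing the role of the factorial.

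First I would prove the \emph{equivalence} between \eqref{eqcp} and the integral equation \eqref{eqcp2} on $\CC^0([a,b],\R^d)$. If $g$ solves \eqref{eqcp}, then $\cDM g = F(g,\cdot) \in \CC^0([a,b],\R^d)$, so applying $I^\alpha_-$ and invoking the second identity of Property \ref{prop3}, namely $I^\alpha_- \circ \cDM g = g - g(a)$, together with $g(a)=A$, yields \eqref{eqcp2}. Conversely, if $g$ satisfies \eqref{eqcp2}, then applying $\cDM$ and using the first identity $\cDM \circ I^\alpha_- g = g$ of Property \ref{prop3} returns $\cDM g = F(g,\cdot)$, while evaluating \eqref{eqcp2} at $t=a$ and using $I^\alpha_-(\cdot)(a)=0$ gives $g(a)=A$.

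Next I would introduce the operator
\begin{equation}
\mathcal{T} : \CC^0([a,b],\R^d) \longrightarrow \CC^0([a,b],\R^d), \qquad \mathcal{T}(g)(t) = A + I^\alpha_- \big( F(g,\cdot) \big)(t).
\end{equation}
Since $g$ and $F$ are continuous, $t \mapsto F(g(t),t)$ is continuous, and by the first example of Section \ref{section12} its left fractional integral lies in $\CC^0([a,b],\R^d)$; thus $\mathcal{T}$ is well defined and its fixed points are exactly the solutions of \eqref{eqcp2}. The core estimate comes from the Lipschitz hypothesis: for $g_1,g_2 \in \CC^0([a,b],\R^d)$ and all $t$,
\begin{equation}
\Vert \mathcal{T}(g_1)(t) - \mathcal{T}(g_2)(t) \Vert \leq K \, I^\alpha_- \big( \Vert g_1 - g_2 \Vert \big)(t).
\end{equation}
Iterating this $n$ times, using the positivity of the fractional integral kernel together with the semi-group Property \ref{prop1} to collapse the composition, and the elementary computation $I^{n\alpha}_- 1(t) = (t-a)^{n\alpha}/\Gamma(n\alpha+1)$, I obtain
\begin{equation}
\Vert \mathcal{T}^n(g_1) - \mathcal{T}^n(g_2) \Vert_\infty \leq \dfrac{\big( K (b-a)^\alpha \big)^n}{\Gamma(n\alpha + 1)} \, \Vert g_1 - g_2 \Vert_\infty,
\end{equation}
where $\Vert \cdot \Vert_\infty$ denotes the uniform norm.

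The coefficients $\big(K(b-a)^\alpha\big)^n / \Gamma(n\alpha+1)$ are precisely the terms of a convergent Mittag-Leffler series, hence tend to $0$ as $n \to \infty$; consequently there exists $n$ for which $\mathcal{T}^n$ is a contraction on the complete space $\big( \CC^0([a,b],\R^d), \Vert \cdot \Vert_\infty \big)$. By the standard corollary of the Banach fixed point theorem (a map of which some iterate is a contraction has a unique fixed point), $\mathcal{T}$ admits a unique fixed point $g$, which is then the unique solution of \eqref{eqcp} and satisfies \eqref{eqcp2}. For the regularity claim, when $0 < \alpha < 1$ one has $[\alpha]=0$ and $g \in \CC^0([a,b],\R^d)$ is already established; when $\alpha = 1$ one has $[\alpha]=1$ and \eqref{eqcp2} reduces to $g(t) = A + \int_a^t F(g(y),y)\, dy$ with continuous integrand, so $g \in \CC^1([a,b],\R^d)$. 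The main obstacle is the iterated contraction estimate: correctly folding the repeated fractional integrals through the semi-group property and recognizing the Gamma-function growth that forces $\mathcal{T}^n$ to contract even though $\mathcal{T}$ itself need not be a contraction.
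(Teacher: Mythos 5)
Your proposal is correct and follows essentially the same route as the paper: both establish the equivalence with the integral equation \eqref{eqcp2} via Property \ref{prop3}, then show that some iterate of the Picard operator is a contraction thanks to the bound $K^n(b-a)^{n\alpha}/\Gamma(1+n\alpha) \to 0$, and conclude by the fixed-point theorem. Your treatment of the regularity claim $g \in \CC^{[\alpha]}([a,b],\R^d)$ is a small welcome addition, since the paper leaves that point implicit.
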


\begin{proof}
Firstly, let $g \in \CC^0 ([a,b],\R^d)$. From Property \ref{prop3}, we conclude that $g$ is solution of the fractional Cauchy problem \eqref{eqcp} if and only if $g$ satisfies Equality \eqref{eqcp2}. Now, let us define:
\begin{equation}
\fonction{\varphi}{\CC^0 ([a,b],\R^d)}{\CC^0 ([a,b],\R^d)}{g}{\fonction{\varphi (g)}{[a,b]}{\R^d}{t}{A + I^\alpha_- \big( F(g,t) \big) (t).}}
\end{equation}
By induction, we prove that for any $n \in \N^*$ and any $g_1$, $g_2 \in \CC^0 ([a,b],\R^d)$, we have:
\begin{equation}
\forall t \in [a,b], \; \Vert \varphi^n (g_1)(t) - \varphi^n (g_2)(t) \Vert \leq \dfrac{ K^n(t-a)^{n\alpha}}{\Gamma (1+n\alpha) } \Vert g_1 - g_2 \Vert_{\infty}.
\end{equation}
Consequently, we have for any $n \in \N^*$ and any $g_1$, $g_2 \in \CC^0 ([a,b],\R^d)$:
\begin{equation}
\Vert \varphi^n (g_1) - \varphi^n (g_2) \Vert_{\infty} \leq \dfrac{ K^n(b-a)^{n\alpha}}{\Gamma (1+n\alpha) } \Vert g_1 - g_2 \Vert_{\infty}.
\end{equation}
Since $ K^n(b-a)^{n\alpha} / \Gamma (1+n\alpha) \longrightarrow 0$, there exists $n \in \N^*$ such that $\varphi^n$ is a contraction. Consequently, $\varphi$ admits an unique fix point in $\CC^0 ([a,b],\R^d)$. The proof is completed.
\end{proof}

\section{A class of fractional optimal control problems}\label{section2}
From now and for all the rest of the paper, we consider $0 < \alpha \leq 1$ and $A \in \R^d$.

\subsection{Presentation of the problem and usual strategy}\label{section21}
In this section, we first give a brief presentation of the class of fractional optimal control problems interesting us in this paper. Finally, we briefly remind the usual strategy (using the Lagrange multiplier technique) leading to a necessary condition for the existence of an optimal control, see \cite{agra2,agra3,torr,torr2,jeli} and references therein. \\

An optimal control problem concerns the optimization of a quantity depending on parameters given by a controlled system. Precisely, the aim is to find a control optimizing a certain cost. Such a control is called \textit{optimal control}. Many of these studies lead to the use of the Lagrange multiplier technique. For example, we refer to \cite{berg,favi,gunz2,gunz} for controlled systems coded by ordinary differential equations. \\

In this paper, we work in the following framework. We are interested in systems controlled by the following fractional Cauchy problem:
\begin{equation}\label{eq21-1}
\left\lbrace \begin{array}{l}
\cDM q = f (q,u,t) \\
q(a) = A,
\end{array} \right.
\end{equation}
where $u$ denotes the control. Our aim is to find a control $u$ optimizing a quantity of the form:
\begin{equation}\label{eq21-2}
\di \int_a^b L(q,u,t) \; dt ,
\end{equation}
where $q$ is solution of \eqref{eq21-1}. \\

The common strategy is first to write this problem as a problem of optimization under constraint: 
\begin{equation}\label{eq21-3}
\argmin\limits_{\substack{(q,u) \; \text{satisfying}\; \eqref{eq21-1}}} \; \di \int_a^b L(q,u,t) \; dt.
\end{equation}
Then, the Lagrange multiplier technique consists in the study of the critical points of the following \textit{augmented functional}:
\begin{equation}
(q,u,p) \longmapsto \di \int_a^b L(q,u,t)-p \cdot \big( \cDM q - f(q,u,t) \big) \; dt ,
\end{equation}
where $p$ is commonly called \textit{Lagrange multiplier}. Finally, with a calculus of variations, such a strategy leads to the following result: a necessary condition for $(q,u)$ to be a solution of \eqref{eq21-3} is that there exists a function $p$ such that the following \textit{fractional Pontryagin's system} holds:
\begin{equation}
 \left\lbrace \begin{array}{l}
 		\cDM q = \dfrac{\partial H}{\partial w} (q,u,p,t) \\[10pt]
 		\DP p = \dfrac{\partial H}{\partial x} (q,u,p,t) \\[10pt]
	    \dfrac{\partial H}{\partial v} (q,u,p,t) = 0 \\[10pt]
	    \big( q(a),p(b) \big) = ( A,0 ),
	    \end{array}
\right.
\end{equation}
where $ H(x,v,w,t) = L(x,v,t) + w  \cdot f(x,v,t) $. We refer to \cite{agra2,agra3,torr,torr2,jeli} and references therein for more details. \\

In this paper, we are going to assume that $f$ satisfies a Lipschitz type condition, see Condition \eqref{condf}. This assumption will allow us to write \textit{directly} the initial problem as a simpler problem of optimization without constraint of the initial functional which is then \textit{only dependent of the control} $u$. Finally, a simple calculus of variations leads us to the same result but with a new presentation. Let us remind that the method here developed is widely inspired from the Lagrange multiplier technique. Nevertheless, it allows us to give a complete proof only using classical mathematical tools adapted to the fractional case: \textit{calculus of variations}, \textit{Gronwall's Lemma}, \textit{Cauchy-Lipschitz Theorem} and \textit{stability under perturbations of differential equations}.

\subsection{New presentation of the result}\label{section22}
In this section, we first give rigorously the definitions concerning the class of fractional optimal control problems described in Section \ref{section21}:
\begin{itemize}
\item The elements denoted $u \in \CC^0 ([a,b],\R^m)$ are called \textit{controls};
\item Let $f$ be a $\CC^2$ function of the form:
\begin{equation}
\fonction{f}{\R^d \times \R^m \times [a,b]}{\R^d}{(x,v,t)}{f(x,v,t).}
\end{equation}
It is commonly called the \textit{constraint function}. We assume that $f$ satisfies the following Lipschitz type condition. There exists $M \geq 0$ such that:
\begin{equation}\tag{$f_x$ lip}\label{condf}
\forall (x_1,x_2,v,t) \in (\R^d)^2 \times \R^m \times [a,b], \; \Vert f(x_1,v,t) - f(x_2,v,t) \Vert \leq M \Vert x_1 - x_2 \Vert ;
\end{equation}
\item For any control $u$, let $q^{u,\alpha} \in \CC^{[\alpha]} ([a,b],\R^d)$ denote the unique global solution of the following fractional Cauchy problem:
\begin{equation}\tag{CP${}^\alpha_q$}\label{eqcpq}
 \left\lbrace \begin{array}{l}
         \cDM q =f(q,u,t)\\
	     q(a) = A.
        \end{array}
\right.
\end{equation}
$q^{u,\alpha}$ is commonly called the \textit{state variable} associated to $u$. Its existence and its uniqueness are provided by Theorem \ref{thmfcl} and Condition \eqref{condf};
\item Finally, with this condition on $f$, the fractional optimal control problem described in Section \ref{section21} can be rewritten as the simpler problem of optimization of the following \textit{cost functional}:
\begin{equation}
\fonction{\LL^\alpha}{\CC^0 ([a,b],\R^m )}{\R}{u}{\di \int_{a}^{b} L ( q^{u,\alpha},u,t ) \; dt ,} 
\end{equation}
where $L$ is a \textit{Lagrangian}, \textit{i.e.} a $\CC^2$ application of the form:
\begin{equation}
\fonction{L}{\R^d \times \R^m \times [a,b]}{\R}{(x,v,t)}{L(x,v,t).}
\end{equation}
\end{itemize}
Hence, the existence and the uniqueness of $q^{u,\alpha}$ for any control $u$ allow us to rewrite \textit{directly} the initial problem as a simpler problem of optimization without constraint of the cost functional $\LL^\alpha$: we do not need to introduce an augmented functional with a Lagrange multiplier. Moreover, let us note that $\LL^\alpha$ is only dependent of the control. \\

A control optimizing $\LL^\alpha$ is called \textit{optimal control}. A necessary condition for a control $u$ to be optimal is to be a \textit{critical point} of $\LL^\alpha$, \textit{i.e.} to satisfy:
\begin{equation}
\forall \bar{u} \in  \CC^0 ([a,b],\R^m ), \;  D\LL^\alpha(u)(\bar{u}) := \lim\limits_{\eps \rightarrow 0} \dfrac{\LL^\alpha(u+\eps \bar{u})-\LL^\alpha(u)}{\eps} = 0. 
\end{equation}
In the following, we then focus on the characterization of the critical points of $\LL^\alpha$. With an usual calculus of variations, we obtain the following Lemma \ref{lem1} giving explicitly the value of the G\^ateaux derivative of $\LL^\alpha$:
\begin{lemma}\label{lem1}
Let $u$, $\bar{u} \in \CC^0 ([a,b],\R^m )$. Then, the following equality holds:
\begin{equation}
D\LL^\alpha(u)(\bar{u}) = \di \int_a^b \dfrac{\partial L}{\partial x} (q^{u,\alpha},u,t) \cdot \bar{q} + \dfrac{\partial L}{\partial v} (q^{u,\alpha},u,t) \cdot \bar{u} \; dt,
\end{equation}
where $\bar{q} \in \CC^{[\alpha]} ( [a,b], \R^d )$ is the unique global solution of the following linearised Cauchy problem:
\begin{equation}\label{eqlcpq}\tag{LCP${}^\alpha_{\bar{q}}$}
 \left\lbrace \begin{array}{l}
 		\cDM \bar{q} = \dfrac{\partial f}{\partial x} (q^{u,\alpha},u,t) \times \bar{q} + \dfrac{\partial f}{\partial v} (q^{u,\alpha},u,t) \times \bar{u} \\[10pt]
 		\bar{q}(a) = 0.
        \end{array}
\right. 
\end{equation}
\end{lemma}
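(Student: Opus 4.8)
The plan is to compute the G\^ateaux derivative directly from its definition by analysing how the state variable responds to a perturbation of the control. Fix $u$, $\bar{u} \in \CC^0([a,b],\R^m)$ and, for $\eps$ small, write $q_\eps := q^{u+\eps\bar{u},\alpha}$ for the associated state variable, so that $q_0 = q^{u,\alpha}$. By Theorem \ref{thmfcl} each $q_\eps$ satisfies the integral equation $q_\eps(t) = A + I^\alpha_-\big(f(q_\eps,u+\eps\bar{u},t)\big)(t)$. The first step is to establish continuity of the solution map, namely $\norm{q_\eps - q_0}_\infty \to 0$ as $\eps \to 0$: subtracting the two integral equations, one uses Condition \eqref{condf} to control the $x$-dependence and the uniform continuity of $f$ together with $\norm{(u+\eps\bar{u})-u}_\infty \to 0$ to control the $v$-dependence, then closes the estimate with the same iterated-contraction (fractional Gronwall) bound that underlies the proof of Theorem \ref{thmfcl}, summing the $\Gamma(1+n\alpha)$ terms.

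The second, and central, step is to identify the first-order variation of the state. Let $\bar{q} \in \CC^{[\alpha]}([a,b],\R^d)$ be the unique solution of the linearised problem \eqref{eqlcpq}; it exists and is unique by Theorem \ref{thmfcl}, since its right-hand side is affine in $\bar{q}$ with coefficient $\frac{\partial f}{\partial x}(q^{u,\alpha},u,t)$ continuous, hence bounded, on $[a,b]$, so the required Lipschitz hypothesis holds. I would then prove that the difference quotient converges, i.e. $\norm{\eps^{-1}(q_\eps - q_0) - \bar{q}}_\infty \to 0$. Setting $z_\eps := \eps^{-1}(q_\eps - q_0) - \bar{q}$ and using the $\CC^2$-regularity of $f$ to Taylor-expand $f(q_\eps,u+\eps\bar{u},t) - f(q_0,u,t)$ to first order about $(q_0,u)$, the integral equation for $z_\eps$ takes the form $z_\eps(t) = I^\alpha_-\big(\frac{\partial f}{\partial x}(q_0,u,t)\, z_\eps + \rho_\eps\big)(t)$, where the remainder $\rho_\eps$ collects the higher-order Taylor terms and tends to $0$ uniformly by virtue of the Step-one convergence $\norm{q_\eps - q_0}_\infty \to 0$. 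A further application of the fractional Gronwall estimate then forces $\norm{z_\eps}_\infty \to 0$.

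The final step is routine. Writing $\eps^{-1}\big(\LL^\alpha(u+\eps\bar{u}) - \LL^\alpha(u)\big) = \int_a^b \eps^{-1}\big(L(q_\eps,u+\eps\bar{u},t) - L(q_0,u,t)\big)\, dt$ and Taylor-expanding $L$ to first order about $(q_0,u)$ using its $\CC^2$-regularity, the integrand converges pointwise to $\frac{\partial L}{\partial x}(q_0,u,t)\cdot\bar{q} + \frac{\partial L}{\partial v}(q_0,u,t)\cdot\bar{u}$, where I invoke the uniform convergence $\eps^{-1}(q_\eps - q_0) \to \bar{q}$ from Step two. Since all the relevant bounds are uniform on the compact interval $[a,b]$, dominated convergence lets me pass to the limit under the integral sign, which yields exactly the stated expression for $D\LL^\alpha(u)(\bar{u})$.

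I expect the main obstacle to be the uniform control of the Taylor remainder $\rho_\eps$ in the second step: one must show that it is $o(1)$ in $\CC^0$-norm, which requires combining the first-order uniform convergence of $q_\eps$, the boundedness of the second derivatives of $f$ on the compact set swept out by the $q_\eps$, and the fractional Gronwall bound. The interplay of these three ingredients is where the argument is most delicate, and it is precisely the \emph{stability under perturbations of differential equations} tool announced in the introduction.
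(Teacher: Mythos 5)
Your strategy is essentially the one the paper follows: its Lemma \ref{lemappA1} is the quantitative form of your Step one, its Lemma \ref{lemappA2} is the quantitative form of your Step two (it proves $\Vert q^{u+\eps\bar u,\alpha}-q^{u,\alpha}-\eps\bar q\Vert_\infty\leq C\eps^2$ rather than merely $o(\eps)$), and your final Taylor expansion of $L$ with passage to the limit under the integral is Appendix \ref{appAd} almost verbatim. Both arguments rest on the same ingredients: Taylor expansions of $f$ and $L$ with explicit remainder, compactness of the set swept out by the perturbed trajectories, and the fractional Gronwall Lemma \ref{lemgronf}.

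The one place where your sketch, taken literally, does not close is the hand-off between Steps one and two. The remainder $\rho_\eps$ of Step two is $\eps^{-1}$ times the second-order Taylor remainder of $f$, hence contains a term of size $\eps^{-1}\Vert q_\eps-q_0\Vert_\infty^2$; for $\rho_\eps\to 0$ uniformly you therefore need $\Vert q_\eps-q_0\Vert_\infty=O(\vert\eps\vert)$ (or at least $o(\vert\eps\vert^{1/2})$), and not just the $o(1)$ that Step one delivers when the $v$-increment is controlled only by uniform continuity of $f$. Your closing paragraph invokes ``first-order uniform convergence of $q_\eps$'', which is indeed the right ingredient, but the argument described in Step one does not produce it. The fix is exactly the paper's Lemma \ref{lemappA1}: replace the uniform-continuity estimate by a first-order Taylor bound
\begin{equation*}
\Vert f(q_0,u+\eps\bar u,t)-f(q_0,u,t)\Vert\leq\vert\eps\vert\,\Big\Vert\dfrac{\partial f}{\partial v}(q_0,\xi^{\eps},t)\times\bar u\Big\Vert,
\end{equation*}
with $\xi^{\eps}$ ranging over a compact set independent of $\eps$, so that the Gronwall step yields $\Vert q_\eps-q_0\Vert_\infty\leq C_1\vert\eps\vert$. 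With that upgrade your Steps two and three go through exactly as in Lemma \ref{lemappA2} and Appendix \ref{appAd}.
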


\begin{proof}
See Appendix \ref{appAd}.
\end{proof}

This last result does not lead to a characterization of the critical points of $\LL^\alpha$ yet. Then, let us introduce the following elements stemming from the Lagrange multiplier technique:
\begin{itemize}
\item Let $H$ be the following application
\begin{equation}
\fonction{H}{\R^d \times \R^m \times \R^d \times [a,b]}{\R}{(x,v,w,t)}{L(x,v,t)+w \cdot f(x,v,t).}
\end{equation}
$H$ is commonly called the \textit{Hamiltonian} associated to the Lagrangian $L$ and the constraint function $f$;
\item For any control $u$, let $p^{u,\alpha} \in \CC^{[\alpha]} ( [a,b], \R^d )$ denote the unique global solution of the following fractional Cauchy problem:
\begin{equation}\label{eqcpp}\tag{CP${}^\alpha_p$}
 \left\lbrace \begin{array}{l}
         \cDP p = \dfrac{\partial H}{\partial x}(q^{u,\alpha},u,p,t) = \dfrac{\partial L}{\partial x}(q^{u,\alpha},u,t) + \left( \dfrac{\partial f}{\partial x}(q^{u,\alpha},u,t)\right)^T \times p \\[10pt]
	     p(b) = 0.
        \end{array}
\right.
\end{equation}
$p^{u,\alpha}$ is commonly called the \textit{adjoint variable} associated to $u$. Its existence and its uniqueness are provided by the analogous version of Theorem \ref{thmfcl} for right fractional derivative. Since $p^{u,\alpha} (b) = 0$, we can write $\cDP p^{u,\alpha} = \DP p^{u,\alpha}$.
\end{itemize}
Let us note that, for any control $u$, the couple $(q^{u,\alpha},p^{u,\alpha})$ is solution of the following \textit{fractional Hamiltonian system}:
\begin{equation}\tag{HS${}^\alpha$}\label{eqhs}
 \left\lbrace \begin{array}{l}
 		 \cDM q = \dfrac{\partial H}{\partial w} (q,u,p,t) \\[10pt]
 		\DP p = \dfrac{\partial H}{\partial x} (q,u,p,t).
        \end{array}
\right.
\end{equation}
Finally, the introduction of these last elements allows us to prove the following theorem:
\begin{theorem}\label{thmfinal}
Let $u \in \CC^0 ([a,b],\R^m)$. Then, $u$ is a critical point of $\LL^{\alpha}$ if and only if $(q^{u,\alpha},u,p^{u,\alpha})$ is solution of the following \textit{fractional stationary equation}:
\begin{equation}\tag{SE${}^\alpha$}\label{eqse}
\dfrac{\partial H}{\partial v} (q,u,p,t) = 0.
\end{equation}
\end{theorem}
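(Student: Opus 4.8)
The plan is to start from the explicit expression for the G\^ateaux derivative furnished by Lemma \ref{lem1} and to use the adjoint variable $p^{u,\alpha}$ to eliminate the auxiliary function $\bar{q}$ entirely, so that $D\LL^\alpha(u)(\bar{u})$ becomes a linear functional of $\bar{u}$ alone whose kernel is detected by the fundamental lemma of the calculus of variations. Throughout I would write $q = q^{u,\alpha}$ and $p = p^{u,\alpha}$ for brevity, $\bar{q}$ denoting the solution of \eqref{eqlcpq}.

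First I would rewrite the term $\frac{\partial L}{\partial x}(q,u,t) \cdot \bar{q}$ occurring in Lemma \ref{lem1}. By definition of $H$ the adjoint equation \eqref{eqcpp} reads $\cDP p = \frac{\partial L}{\partial x}(q,u,t) + \big(\frac{\partial f}{\partial x}(q,u,t)\big)^T \times p$, hence $\frac{\partial L}{\partial x}(q,u,t) = \cDP p - \big(\frac{\partial f}{\partial x}(q,u,t)\big)^T \times p$. Substituting this into Lemma \ref{lem1} and using the identity $\big(\frac{\partial f}{\partial x}\big)^T p \cdot \bar{q} = p \cdot \big(\frac{\partial f}{\partial x} \times \bar{q}\big)$ splits $D\LL^\alpha(u)(\bar{u})$ into the three integrals $\int_a^b \cDP p \cdot \bar{q}\, dt$, the correction $-\int_a^b p \cdot \big(\frac{\partial f}{\partial x} \times \bar{q}\big)\, dt$, and the remaining term $\int_a^b \frac{\partial L}{\partial v} \cdot \bar{u}\, dt$.

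The central step is a fractional integration by parts transferring the derivative from $p$ onto $\bar{q}$, namely $\int_a^b \cDP p \cdot \bar{q}\, dt = \int_a^b p \cdot \cDM \bar{q}\, dt$. I would establish it cleanly and uniformly in $0 < \alpha \leq 1$ by exploiting the boundary conditions rather than by computing boundary terms: since $\bar{q}(a) = 0$ and the right-hand side of \eqref{eqlcpq} is continuous, so that $\cDM \bar{q} \in \CC^0([a,b],\R^d)$, Property \ref{prop3} gives $\bar{q} = I^\alpha_-(\cDM \bar{q})$; the right-sided analogue of Property \ref{prop3} together with $p(b) = 0$ gives $p = I^\alpha_+(\cDP p)$; then Property \ref{prop2} applied to the continuous functions $g_1 = \cDM \bar{q}$ and $g_2 = \cDP p$ yields the claimed identity. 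This is the step I expect to be the main obstacle, mainly because it requires justifying the two representations $\bar{q} = I^\alpha_- \cDM \bar{q}$ and $p = I^\alpha_+ \cDP p$ with the correct vanishing boundary terms and invoking the (stated but unwritten) right-sided version of Property \ref{prop3}.

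Once this is in hand, I would insert the linearised dynamics \eqref{eqlcpq}, that is $\cDM \bar{q} = \frac{\partial f}{\partial x} \times \bar{q} + \frac{\partial f}{\partial v} \times \bar{u}$, into $\int_a^b p \cdot \cDM \bar{q}\, dt$. The resulting term $\int_a^b p \cdot \big(\frac{\partial f}{\partial x} \times \bar{q}\big)\, dt$ cancels exactly against the correction term from the second paragraph, all dependence on $\bar{q}$ disappears, and one is left with
\begin{equation*}
D\LL^\alpha(u)(\bar{u}) = \di\int_a^b \Big( \dfrac{\partial L}{\partial v}(q,u,t) + \Big(\dfrac{\partial f}{\partial v}(q,u,t)\Big)^T \times p \Big) \cdot \bar{u}\, dt = \di\int_a^b \dfrac{\partial H}{\partial v}(q,u,p,t) \cdot \bar{u}\, dt,
\end{equation*}
the last equality being the definition of $H$. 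To conclude, I would argue both implications from this formula: if \eqref{eqse} holds the integrand vanishes identically and $u$ is a critical point; conversely, if $D\LL^\alpha(u)(\bar{u}) = 0$ for every $\bar{u} \in \CC^0([a,b],\R^m)$, then since $t \mapsto \frac{\partial H}{\partial v}(q,u,p,t)$ is continuous, the fundamental lemma of the calculus of variations forces $\frac{\partial H}{\partial v}(q,u,p,t) = 0$ on $[a,b]$, which is precisely \eqref{eqse}.
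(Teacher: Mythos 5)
Your proposal is correct and follows essentially the same route as the paper's proof: both substitute the adjoint dynamics into the formula of Lemma \ref{lem1}, use the integral representations $\bar{q} = I^\alpha_-(\cDM \bar{q})$ and $p^{u,\alpha} = I^\alpha_+(\cDP p^{u,\alpha})$ together with Property \ref{prop2} to shift the operator, cancel the $\frac{\partial f}{\partial x}\times\bar{q}$ cross term, and conclude with the Dubois--Raymond lemma. The only difference is organizational -- you isolate the identity $\int_a^b \cDP p\cdot\bar{q}\,dt=\int_a^b p\cdot\cDM\bar{q}\,dt$ as a named intermediate step, while the paper performs the same manipulation inline.
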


\begin{proof}
Let $u$, $\bar{u} \in \CC^0 ([a,b],\R^m)$. From Lemma \ref{lem1}, we have:
\begin{equation}
D\LL^\alpha(u)(\bar{u}) = \di \int_{a}^{b} \dfrac{\partial L}{\partial x} (q^{u,\alpha},u,t) \cdot \bar{q} + \dfrac{\partial L}{\partial v} (q^{u,\alpha},u,t) \cdot \bar{u} \; dt.
\end{equation}
Then:
\begin{multline}
D\LL^\alpha(u)(\bar{u}) = \di \int_{a}^{b} \left( \dfrac{\partial L}{\partial x} (q^{u,\alpha},u,t) + \left( \dfrac{\partial f}{\partial x} (q^{u,\alpha},u,t) \right)^T \times p^{u,\alpha} \right) \cdot \bar{q} \\
- \left( \dfrac{\partial f}{\partial x} (q^{u,\alpha},u,t) \times \bar{q} \right) \cdot p^{u,\alpha} + \dfrac{\partial L}{\partial v} (q^{u,\alpha},u,t) \cdot \bar{u} \; dt.
\end{multline}
From Theorem \ref{thmfcl}, since $\bar{q}$ is solution of \eqref{eqlcpq} and $p^{u,\alpha}$ is solution of \eqref{eqcpp}, we have:
\begin{equation}
\bar{q} = I^\alpha_- \left( \dfrac{\partial f}{\partial x} (q^{u,\alpha},u,t) \times \bar{q} + \dfrac{\partial f}{\partial v} (q^{u,\alpha},u,t) \times \bar{u} \right) 
\end{equation}
and
\begin{equation}
p^{u,\alpha} = I^\alpha_+ \left( \dfrac{\partial L}{\partial x} (q^{u,\alpha},u,t) + \left( \dfrac{\partial f}{\partial x} (q^{u,\alpha},u,t) \right)^T \times p^{u,\alpha} \right).
\end{equation}
Then, using the fractional integration by parts given in Property \ref{prop2}, we obtain:
\begin{multline}
D\LL^\alpha(u)(\bar{u}) = \di \int_{a}^{b} p^{u,\alpha} \cdot \left( \dfrac{\partial f}{\partial x} (q^{u,\alpha},u,t) \times \bar{q} + \dfrac{\partial f}{\partial v} (q^{u,\alpha},u,t) \times \bar{u} \right) \\
- \left( \dfrac{\partial f}{\partial x} (q^{u,\alpha},u,t) \times \bar{q} \right) \cdot p^{u,\alpha} + \dfrac{\partial L}{\partial v} (q^{u,\alpha},u,t) \cdot \bar{u} \; dt.
\end{multline}
Then:
\begin{eqnarray*}
D\LL^\alpha(u)(\bar{u}) & = & \di \int_{a}^{b} \left( \left( \dfrac{\partial f}{\partial v} (q^{u,\alpha},u,t) \right)^T \times p^{u,\alpha} + \dfrac{\partial L}{\partial v} (q^{u,\alpha},u,t) \right) \cdot \bar{u} \; dt \\
& = & \di \int_{a}^{b} \dfrac{\partial H}{\partial v} (q^{u,\alpha},u,p^{u,\alpha},t)\cdot \bar{u} \; dt.
\end{eqnarray*}
The proof is completed by the Dubois-Raymond's lemma.
\end{proof}

Let us note that Lemma \ref{lem1} is proved in Appendix \ref{appAd} using Lemmas \ref{lemgronf}, \ref{lemappA1} and \ref{lemappA2}. These last three Lemmas are respectively a fractional Gronwall's Lemma, a result of stability of order $1$ and a result of stability of order $2$ for the fractional Cauchy problem \eqref{eqcpq}. Hence, the proof of Theorem \ref{thmfinal} is only based on classical mathematical tools adapted to the fractional case. \\

From Theorem \ref{thmfinal}, we retrieve the following result provided in \cite{agra2,agra3,torr,torr2,jeli} and references therein:
\begin{corollary}\label{corfinal}
$\LL^\alpha$ has a critical point in $\CC^0 ([a,b],\R^m)$ if and only if there exists $(q,u,p) \in \CC^{[\alpha]} ([a,b],\R^d) \times \CC^0 ([a,b],\R^m) \times \CC^{[\alpha]} ([a,b],\R^d)$ solution of the following \textit{fractional Pontryagin's system}:
\begin{equation}\tag{PS${}^\alpha $}\label{eqps}
 \left\lbrace \begin{array}{l}
 		\cDM q = \dfrac{\partial H}{\partial w} (q,u,p,t) \\[10pt]
 		\DP p = \dfrac{\partial H}{\partial x} (q,u,p,t) \\[10pt]
	    \dfrac{\partial H}{\partial v} (q,u,p,t) = 0 \\[10pt]
	    \big( q(a),p(b) \big) = ( A,0 ).
        \end{array}
\right.
\end{equation}
In the affirmative case, $u$ is a critical point of $\LL^\alpha$ and we have $(q,p) = (q^{u,\alpha},p^{u,\alpha})$.
\end{corollary}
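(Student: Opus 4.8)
The plan is to read this corollary as a direct repackaging of Theorem~\ref{thmfinal} together with the uniqueness statement of the fractional Cauchy--Lipschitz Theorem~\ref{thmfcl}. The crucial observation is that, by definition of the Hamiltonian $H$, one has $\frac{\partial H}{\partial w}(x,v,w,t) = f(x,v,t)$, so that the first line of \eqref{eqps} is nothing but the state equation appearing in \eqref{eqcpq}, while the third line of \eqref{eqps} is exactly the stationary equation \eqref{eqse}. Thus a solution of the Pontryagin system is precisely a triple for which the state/adjoint pair solves the Hamiltonian system \eqref{eqhs} with the prescribed boundary data and which additionally satisfies \eqref{eqse}.

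First I would establish the forward implication. Assume $\LL^\alpha$ admits a critical point $u$. Setting $(q,p) := (q^{u,\alpha}, p^{u,\alpha})$, recall that this pair solves the Hamiltonian system \eqref{eqhs}, which provides the first two lines of \eqref{eqps}; the boundary conditions $q(a) = A$ and $p(b) = 0$ come directly from the defining Cauchy problems \eqref{eqcpq} and \eqref{eqcpp}. Finally, Theorem~\ref{thmfinal} guarantees that $(q^{u,\alpha}, u, p^{u,\alpha})$ satisfies \eqref{eqse}, which is the third line of \eqref{eqps}. Hence $(q,u,p)$ solves \eqref{eqps}.

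For the converse I would start from a triple $(q,u,p)$ solving \eqref{eqps} and identify $q$ and $p$ by uniqueness. Since $\frac{\partial H}{\partial w}(q,u,p,t) = f(q,u,t)$, the first line of \eqref{eqps} together with $q(a) = A$ shows that $q$ solves \eqref{eqcpq}; by the uniqueness part of Theorem~\ref{thmfcl} one gets $q = q^{u,\alpha}$. Likewise, because $p(b) = 0$ we have $\DP p = \cDP p$, so the second line of \eqref{eqps} with $p(b) = 0$ shows that $p$ solves \eqref{eqcpp}, and the right-handed analogue of Theorem~\ref{thmfcl} yields $p = p^{u,\alpha}$. The third line of \eqref{eqps} then reads $\frac{\partial H}{\partial v}(q^{u,\alpha}, u, p^{u,\alpha}, t) = 0$, that is, \eqref{eqse} holds, and Theorem~\ref{thmfinal} gives that $u$ is a critical point of $\LL^\alpha$.

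The step I expect to require the most care is the identification $p = p^{u,\alpha}$ in the converse direction: one must notice that the boundary condition $p(b)=0$ is exactly what turns the Riemann--Liouville derivative $\DP p$ appearing in \eqref{eqps} into the Caputo derivative $\cDP p$, so that the uniqueness theorem applies to the Caputo Cauchy problem \eqref{eqcpp}. Once both identifications are in place the equivalence is immediate, and the closing sentence of the corollary --- that $u$ is a critical point and $(q,p)=(q^{u,\alpha},p^{u,\alpha})$ --- is recorded along the way.
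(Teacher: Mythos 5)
Your proposal is correct and follows exactly the route the paper intends: the corollary is stated as an immediate consequence of Theorem~\ref{thmfinal}, and your argument supplies precisely the missing details --- the identification $\frac{\partial H}{\partial w}=f$, the use of the uniqueness part of Theorem~\ref{thmfcl} (and its right-sided analogue) to get $(q,p)=(q^{u,\alpha},p^{u,\alpha})$, and the observation, also made in Section~\ref{section22}, that $p(b)=0$ turns $\DP p$ into $\cDP p$ so that the Caputo Cauchy problem \eqref{eqcpp} applies. No discrepancy with the paper's (implicit) proof.
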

In practice, we are going to use more Corollary \ref{corfinal} than Theorem \ref{thmfinal}, see Examples in Section \ref{section4}. Let us note that the fractional Pontryagin's system \eqref{eqps} is made up of the fractional Hamiltonian system \eqref{eqhs}, the fractional stationary equation \eqref{eqse} and initial and final conditions. 

\subsection{Remark $1$: the classical case}\label{section23} In the case $\alpha =1$, the fractional derivatives coincide with the classical one. Consequently, the fractional optimal control problem studied coincides with the classical one. Then, in this case, Corollary \ref{corfinal} is nothing else but the classical theorem obtained in \cite{berg,favi,gunz2,gunz}:
\begin{theorem}\label{corfinalclass}
$\LL^1$ has a critical point in $\CC^0 ([a,b],\R^m)$ if and only if there exists $(q,u,p) \in \CC^{1} ([a,b],\R^d) \times \CC^0 ([a,b],\R^m) \times \CC^{1} ([a,b],\R^d)$ solution of the following \textit{Pontryagin's system}:
\begin{equation}\tag{PS${}^1 $}\label{eqpsclass}
 \left\lbrace \begin{array}{l}
 		\dot{q} = \dfrac{\partial H}{\partial w} (q,u,p,t) \\[10pt]
 		\dot{p} = - \dfrac{\partial H}{\partial x} (q,u,p,t) \\[10pt]
	    \dfrac{\partial H}{\partial v} (q,u,p,t) = 0 \\[10pt]
	    \big( q(a),p(b) \big) = (A,0).
        \end{array}
\right.
\end{equation}
In the affirmative case, $u$ is a critical point of $\LL^1$ and we have $(q,p) = (q^{u,1},p^{u,1})$.
\end{theorem}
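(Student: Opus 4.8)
The plan is to obtain Theorem \ref{corfinalclass} as the direct specialization of Corollary \ref{corfinal} to the value $\alpha = 1$, the only remaining work being the translation of the fractional operators and function spaces into their classical counterparts. Since Corollary \ref{corfinal} has already been established for every $0 < \alpha \le 1$, and in particular for $\alpha = 1$, I would simply invoke it and then rewrite each ingredient of the fractional Pontryagin's system \eqref{eqps} in classical language.

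First I would record the identification of the function spaces. For $\alpha = 1$ the floor satisfies $[\alpha] = 1$, so $\CC^{[\alpha]}([a,b],\R^d) = \CC^1([a,b],\R^d)$; hence the triple $(q,u,p)$ furnished by Corollary \ref{corfinal} lies precisely in $\CC^1([a,b],\R^d) \times \CC^0([a,b],\R^m) \times \CC^1([a,b],\R^d)$, exactly as stated in Theorem \ref{corfinalclass}. The definitions of $H$, of the state variable $q^{u,1}$ and of the adjoint variable $p^{u,1}$ carry over unchanged, since they are posed for arbitrary $0 < \alpha \le 1$.

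Next I would translate the operators, using the fact recalled in Section \ref{section11} that in the classical case $\cDM = \DM = d/dt$ while $\cDP = \DP = -d/dt$. The state equation $\cDM q = \partial H/\partial w$ then becomes $\dot{q} = \partial H/\partial w$ with no sign change; the stationary equation $\partial H/\partial v = 0$ and the boundary conditions $(q(a),p(b)) = (A,0)$ contain no derivative and are left intact. The one step requiring attention is the adjoint equation: because $\DP = -d/dt$ here, the fractional relation $\DP p = \partial H/\partial x$ reads $-\dot{p} = \partial H/\partial x$, that is $\dot{p} = -\partial H/\partial x$, which is precisely the second line of \eqref{eqpsclass}.

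Collecting these identifications shows that \eqref{eqps} and \eqref{eqpsclass} coincide when $\alpha = 1$, so the equivalence ``$\LL^1$ has a critical point $\iff$ \eqref{eqpsclass} is solvable'' and the identity $(q,p) = (q^{u,1},p^{u,1})$ transfer verbatim from Corollary \ref{corfinal}. The only genuine subtlety is the extra minus sign carried by the right-sided derivative $\DP$, which is exactly what accounts for the sign discrepancy between the adjoint equations of \eqref{eqps} and \eqref{eqpsclass}; every other ingredient is a pure transcription, so I do not anticipate any real obstacle beyond this bookkeeping.
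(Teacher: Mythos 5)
Your proposal is correct and follows exactly the paper's own (implicit) argument: Section \ref{section23} obtains Theorem \ref{corfinalclass} purely by specializing Corollary \ref{corfinal} to $\alpha=1$, using the identification $D^1_-={}_{\mathrm{c}}D^1_-=-D^1_+=-{}_{\mathrm{c}}D^1_+=d/dt$ recalled in Section \ref{section11}, with the sign of the adjoint equation accounted for by $D^1_+=-d/dt$ just as you describe. Your additional remark that $[\alpha]=1$ gives $\CC^{[\alpha]}=\CC^1$ is a correct and welcome piece of bookkeeping that the paper leaves unstated.
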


\subsection{Remark $2$: link with the fractional Euler-Lagrange equation}\label{section24} Let us take the constraint function $f(x,v,t) = v$ satisfying \eqref{condf}. In this case, the fractional optimal control problem studied is:
\begin{equation}\label{eq24-1}
\text{optimize} \quad \di \int_a^b L(q,u,t) \; dt \quad \text{under the constraint} \quad \left\lbrace \begin{array}{l}
 		\cDM q = u \\
 		q(a) = A.
        \end{array}
\right.
\end{equation}
Using Corollary \ref{corfinal}, if this problem has a solution, then there exists a solution $(q,u,p) \in \CC^{[\alpha]} ([a,b],\R^d) \times \CC^0 ([a,b],\R^m) \times \CC^{[\alpha]} ([a,b],\R^d)$ of the fractional Pontryagin's system \eqref{eqps} here given by:
\begin{equation}
 \left\lbrace \begin{array}{l}
 		\cDM q = u \\[10pt]
 		\DP p = \dfrac{\partial L}{\partial x} (q,u,t) \\[10pt]
	    \dfrac{\partial L}{\partial v} (q,u,t) + p = 0 \\[10pt]
	    \big( q(a),p(b) \big) = (A,0).
        \end{array}
\right.
\end{equation}
In the case of the existence of an optimal control $u$ for problem \eqref{eq24-1}, we obtain that the state variable associated $q^{u,\alpha}$ is solution of the following \textit{fractional Euler-Lagrange equation}:
\begin{equation}\tag{EL${}^\alpha$}
 \dfrac{\partial L}{\partial x} (q,\cDM q,t) + \DP \left(  \dfrac{\partial L}{\partial v} (q,\cDM q,t) \right) = 0.
\end{equation}
According to the works of Agrawal in \cite{agra}, we then obtain that $q^{u,\alpha}$ is a critical point of the following \textit{fractional Lagrangian functional}:
\begin{equation}
q \longrightarrow \di \int_a^b L(q,\cDM q,t) \; dt.
\end{equation}
We refer to \cite{agra} for more details concerning fractional Euler-Lagrange equations.

\section{A transition from the classical to the fractional problem}\label{section3}
As seen in Introduction, there is a large development in fractional calculus and consequently concerning optimal control problems with controlled systems coded by fractional differential equations. In this section, we present a link between a classical optimal control problem and its fractional version via the \textit{Stanislavsky's formalism}, see \cite{iniz,stan}. Only for this section, we assume that $[a,b]=[0,\tau] $ with $ \tau > 0 $  and $ 0 < \alpha < 1$. 

\subsection{The Stanislavsky's formalism}\label{section31}
The governing equation for a fluid in a homogeneous porous medium is usually the classical Richards equation and it is derived from physical principles, see \cite{rich}. Nevertheless, the fractional Richards equation plays an important role in the study of the behaviour of a fluid in a heterogeneous medium, \cite{hilf,neel,neel2}. A natural question then arises: \textit{how physically understand the emergence of the fractional order when the medium changes?} Let us sketch an example of answer. A heterogeneous porous medium presents a lot of stochastic heterogeneities implying complex geometric structure and then inducing stochastic retention zones, \cite{neel}. Hence, we can assume that a change of medium induces a \textit{modification of the time variable}. Therefore, following the work of Stanislavsky consisting in the introduction of a stochastic internal time (a "slow" time), we remind that the emergence of a fractional derivative can be the effect of a change of time. Let us give more details. However, we refer to \cite{iniz,stan} for a complete study. \\

Let us consider $T_t$ a stochastic process of probability density function $\rho (y,t)$. $T_t$ represents the stochastic internal time (the "slow" time). Stanislavsky assumes that the Laplace transform $ \text{Lap}_{(y)}$ of $\rho (y,t)$ with respect to its first variable satisfies:
\begin{equation}
\forall t \geq 0, \; \forall s \in \R,\; \text{Lap}_{(y)} [\rho (y,t)](s)={\rm E}_{\alpha,1}(-st^\alpha),
\end{equation}
where ${\rm E}_{\alpha,1}$ is the Mittag-Leffler function defined in Appendix \ref{appAa}. A possible construction of $T_t$ is given in \cite{stan}. \\

Stanislavsky studies the dynamical effects of this change of time. Precisely, for any $g \in \CC^0([0,\tau],\R^d)$ and any $t \in [0,\tau]$, he studies the function $\mathcal{F}_\alpha (g)$ defined by:
\begin{equation}
\mathcal{F}_\alpha (g)(t) = \mathbb{E} \big( g(T_t) \big),
\end{equation}
where $\mathbb{E}$ designates the mean value. The main property is the following result proved in \cite{iniz,stan}:
\begin{lemma}\label{lemstan}
Let $g \in \CC^1([0,\tau],\R^d)$. Then, we have:
\begin{equation}
\forall t \in [0,\tau], \; \mathbb{E} \big( \dot{g} (T_t) \big) = \cDM \big( \mathcal{F}_\alpha (g) \big) (t).
\end{equation}
\end{lemma}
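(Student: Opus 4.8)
The plan is to reduce the identity to the single case of exponential test functions, where it becomes the fractional relaxation property of the Mittag-Leffler function, and then to recover the general $\CC^1$ case by linearity and density. First I would rewrite both members as integrals against the law of $T_t$: by definition $\mathcal{F}_\alpha (g)(t) = \mathbb{E}\big( g(T_t) \big) = \int g(y) \rho(y,t) \, dy$, and the standing hypothesis on $\rho$ is precisely $\mathbb{E}\big( e^{-sT_t} \big) = \int e^{-sy} \rho(y,t)\, dy = {\rm E}_{\alpha,1}(-st^\alpha)$. Hence for the test function $g_s(y) = e^{-sy}$ one has on the one hand $\mathcal{F}_\alpha (g_s)(t) = {\rm E}_{\alpha,1}(-st^\alpha)$, and on the other hand $\dot{g}_s(y) = -s\, e^{-sy}$, so that $\mathbb{E}\big( \dot{g}_s(T_t) \big) = -s\, {\rm E}_{\alpha,1}(-st^\alpha)$.

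The next step is the fractional relaxation identity $\cDM \big( {\rm E}_{\alpha,1}(-st^\alpha) \big) = -s\, {\rm E}_{\alpha,1}(-st^\alpha)$. This follows from the series definition of the Mittag-Leffler function recalled in Appendix \ref{appAa} by differentiating term by term, using the left Caputo derivative of a power, namely $\cDM t^{\alpha k} = \frac{\Gamma(\alpha k + 1)}{\Gamma(\alpha (k-1)+1)}\, t^{\alpha(k-1)}$ for $k \geq 1$ together with $\cDM t^0 = 0$, and then re-indexing the sum. Comparing with the previous paragraph, the Lemma holds for every exponential $g_s$, both members being equal to $-s\, {\rm E}_{\alpha,1}(-st^\alpha)$. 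By linearity of $\mathbb{E}$, of $\mathcal{F}_\alpha$ and of the operators $d/dy$ and $\cDM$, the identity extends at once to every finite linear combination of such exponentials; equivalently, one may verify it directly on monomials $g(y) = y^n$ through the moments $\mathbb{E}\big( T_t^n \big) = n!\, t^{\alpha n} / \Gamma(\alpha n + 1)$ read off from the same series.

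Finally, for an arbitrary $g \in \CC^1([0,\tau],\R^d)$ I would approximate $g$ and $\dot{g}$ uniformly on the compact support of the $T_t$ by a sequence $(g_n)$ drawn from the spanning family above and pass to the limit in both members. The convergence $\mathbb{E}\big( \dot{g}_n(T_t) \big) \to \mathbb{E}\big( \dot{g}(T_t) \big)$ of the left-hand side is immediate from uniform convergence, and likewise $\mathcal{F}_\alpha (g_n) \to \mathcal{F}_\alpha (g)$ uniformly in $t$. The delicate point, which I expect to be the main obstacle, is the commutation of the nonlocal operator $\cDM$ with the expectation, that is, with both the integration in $y$ and the limit in $n$: since $\cDM = (d/dt) \circ I^{1-\alpha}_-$, one cannot simply invoke Equality \eqref{eq11-1}, all the more as $\mathcal{F}_\alpha (g)$ is in general only H\"older continuous at $t=0$ and not $\CC^1$ there (already $\mathbb{E}(T_t) = t^\alpha / \Gamma(\alpha+1)$ fails to be differentiable at the origin). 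I would therefore justify this interchange at the level of the integral definition of the Caputo operator, passing $I^{1-\alpha}_-$ through the expectation by dominated convergence and controlling the time-derivative by uniform-in-$t$ integrability estimates on $\rho(\cdot,t)$ combined with the $\CC^1$ bounds on the $g_n$; this secures $\cDM \mathcal{F}_\alpha (g_n) \to \cDM \mathcal{F}_\alpha (g)$ and completes the proof.
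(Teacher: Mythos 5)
First, a point of comparison: the paper does not prove Lemma \ref{lemstan} at all; it imports it from \cite{iniz,stan}, where the argument is carried out by taking the Laplace transform in the \emph{time} variable $t$ (one shows that the transform of $\mathcal{F}_\alpha (g)$ is $s^{\alpha-1}\widehat{g}(s^\alpha)$ and that the Caputo derivative acts on transforms as multiplication by $s^\alpha$ minus the initial datum). Your reduction to exponential test functions in the \emph{internal} variable $y$ is the dual of that computation, and its core is correct: $\mathcal{F}_\alpha (e^{-s\cdot})(t)={\rm E}_{\alpha,1}(-st^\alpha)$ is exactly the standing hypothesis on $\rho$, and the relaxation identity $\cDM \big( {\rm E}_{\alpha,1}(-st^\alpha) \big) = -s\, {\rm E}_{\alpha,1}(-st^\alpha)$ is a correct term-by-term computation. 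The interchange of $\cDM$ with the limit in $n$, which you single out as the main obstacle, is actually the easiest part to repair with the paper's own tools: rather than fighting with $\frac{d}{dt}\circ I^{1-\alpha}_-$, write each approximant in integrated form, $\mathcal{F}_\alpha (g_n)(t)-g_n(0)=I^\alpha_- \big( \mathbb{E}(\dot g_n(T_\cdot)) \big)(t)$, pass to the uniform limit on both sides (fractional integrals preserve uniform convergence), and apply Property \ref{prop3} to recover $\cDM \mathcal{F}_\alpha (g)=\mathbb{E}(\dot g(T_\cdot))$.

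The genuine gap is the density step itself. The process $T_t$ whose density has Laplace transform ${\rm E}_{\alpha,1}(-st^\alpha)$ is the inverse $\alpha$-stable subordinator: it is supported on the whole of $[0,\infty)$, not on a compact set, so ``approximate $g$ and $\dot{g}$ uniformly on the compact support of the $T_t$'' has no meaning. Worse, $g$ is only given on $[0,\tau]$ while $\mathbb{E}(g(T_t))$ samples $g$ at arbitrarily large arguments, so before any approximation one must fix an extension of $g$ to $[0,\infty)$ and check that the conclusion does not depend on it (the formalism of Section \ref{section31} leaves this implicit, so this is partly a defect of the setting, but your proof is the one that has to confront it). On $[0,\infty)$, Stone--Weierstrass gives uniform density of exponential sums only in the space of continuous functions having a limit at infinity; and, a separate point, uniform approximation of $g$ does not yield uniform approximation of $\dot{g}$ -- you need a genuine $\CC^1$ approximation (approximate $\dot{g}$ by an exponential sum and integrate), with enough control at infinity to dominate the expectations. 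None of this is fatal, but as written the passage from the exponential family to a general $g\in\CC^1([0,\tau],\R^d)$ does not go through; the Laplace-transform-in-$t$ proof of \cite{iniz,stan} sidesteps it entirely by working with the given $g$ directly.
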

Hence, a classical derivative is transformed into a fractional one under the introduction of the stochastic internal time of Stanislavsky. 

\subsection{Application on the class of fractional optimal control problems}\label{section32}
Now, let us talk about the consequences of the works of Stanislavsky on the class of fractional optimal control problems studied in this paper. We use the notations and definitions of Section \ref{section22}. \\

Let us assume that $L$ and $f$ are autonomous, \ie $L(x,v,t) = L(x,v)$ and $f(x,v,t) = f(x,v)$. Then, let us assume that $f$ satisfies the following condition:
\begin{multline}\label{condfstan}
\forall (q,u) \in \CC^1 ([0,\tau],\R^d) \times \CC^0 ([0,\tau],\R^m), \; \forall t \in [0,\tau], \\
\mathbb{E}\Big( f \big( q(T_t),u(T_t) \big) \Big) = f \Big( \mathbb{E} \big( q(T_t) \big),\mathbb{E}\big( u(T_t) \big) \Big) .
\end{multline}
Such a condition is satisfied by linear autonomous constraint functions, as the examples in Section \ref{section4}. According to Lemma \ref{lemstan}, one can prove that for any control $u$, $\mathcal{F}_\alpha (q^{u,1})$ satisfies:
\begin{equation}
\left\lbrace \begin{array}{l}
\cDM \mathcal{F}_\alpha (q^{u,1}) = f \big( \mathcal{F}_\alpha (q^{u,1}),\mathcal{F}_\alpha (u) \big) \\
\mathcal{F}_\alpha (q^{u,1}) (a) =A.
\end{array} \right.
\end{equation}
Precisely, we have for any control $u$, $ \mathcal{F}_\alpha (q^{u,1}) = q^{\mathcal{F}_\alpha (u),\alpha} $. \\

Now, let us consider the optimal control problem studied in Section \ref{section22} in the classical case: 
\begin{equation}
\text{optimize the cost functional} \quad u \in \CC^0 ([a,b],\R^d) \longmapsto \di \int_0^\tau L(q^{u,1},u,t) \; dt.
\end{equation}
Let us assume that the controlled system can be prone to changes inducing a modification of the time variable as it is the case with porous media and the Richards equation. We are then interested in the optimization of the following "slow" cost functional:
\begin{equation}\label{eq32-1}
u \in \CC^0 ([a,b],\R^d) \longmapsto \di \int_0^\tau L\big( \mathcal{F}_\alpha (q^{u,1}),\mathcal{F}_\alpha (u) \big) \; dt
\end{equation}
which is relied to the following \textit{fractional} cost functional:
\begin{equation}\label{eq32-2}
u \in \CC^0 ([a,b],\R^d) \longmapsto \di \int_0^\tau L ( q^{u,\alpha},u ) \; dt 
\end{equation}
as its restriction to the image of $\mathcal{F}_\alpha$. Hence, once an optimal control of the functional \eqref{eq32-2} found, one can be interested in its projection on the image of $\mathcal{F}_\alpha$ in order to approach an optimal control of the functional \eqref{eq32-1}. \\

Hence, the Stanislavsky's formalism is an example relying a classical optimal control problem to its fractional version via the introduction of a stochastic internal time.

\section{Examples}\label{section4} 
In this section, we are going to study some examples of optimal control problems studied in Section \ref{section22} both in classical and fractional cases.
\subsection{An applied classical linear-quadratic example}\label{section41}
Classical linear-quadratic examples are often studied in the literature because they are used for tracking problems. The aim of these problems is to determine a control allowing to approach as much as possible reference trajectories, \cite[Part 1.4, p.49]{trel}. In this section, we study such an example, \cite[Part 4.4.3, example 3, p.53]{evan}. More generally, a quadratic Lagrangian is often natural (for example in order to minimize distances) and even if the differential equations are frequently non linear, we are often leaded to study linearised versions. \\

In this section, we take $\alpha = d = m = 1$. We consider an evolution problem of two nondescript populations $z$ and $\xi$ interacting during a time interval $[a,b]$. We control the injection or the discharge of the population $\xi$ in the system: we then control the value of the population $\xi$ in real time. The interaction between $z$ and $\xi$ is governed by the following linear differential equation:
\begin{equation}\label{eq41-000}
\dot{z} = z + \xi .
\end{equation}
For any control $\xi$ and any real $z_0$, we denote by $z_{z_0,\xi}$ the unique solution of \eqref{eq41-000} associated to $\xi$ and satisfying the initial condition $z(a)=z_0 \in \R$. Let us assume that, for an initial condition $z(a)=X \in \R$, we know experimentally a satisfactory control $\xi_X$ such that the evolution of the population $z_{X,\xi_X}$ is healthy with respect to a nondescript constraint. \\

The problem is finally the following: \textit{assume that the initial condition is modified (\textit{i.e.} $z(a)=Y \neq X$), what is the control $\xi$ minimizing the difference between $\xi$ and $\xi_X$ plus the difference between $z_{Y,\xi}$ and $z_{X,\xi_X}$?} More precisely, we are looking for a minimizer of the following functional:
\begin{equation}\label{eq41-0}
\xi \in \CC^0([a,b],\R) \longmapsto \dfrac{1}{2} \di \int_a^b (\xi - \xi_X)^2 + (z_{Y,\xi} -z_{X,\xi_X})^2 \; dt.
\end{equation}
With a change of variable $A=Y-X$, $u=\xi - \xi_X$, $q=z_{Y,\xi} -z_{X,\xi_X}$ and giving the following quadratic Lagrangian and the following linear constraint function:
\begin{equation}\label{eq41-00}
\fonction{L}{\R^2 \times [a,b]}{\R}{(x,v,t)}{(x^2+v^2)/2} \quad \text{and} \quad \fonction{f}{\R^2 \times [a,b]}{\R}{(x,v,t)}{x+v,}
\end{equation}
the problem is reduced to find an optimal control $u$ for the cost functional $\LL^1$ associated to $L$, $f$ and $A$. See Section \ref{section22} for notations and definitions. \\

According to Theorem \ref{corfinalclass}, we are interested in solving the Pontryagin's system \eqref{eqpsclass} here given by:
\begin{equation}
 \left\lbrace \begin{array}{l}
 		\dot{q} = q+u \\
 		\dot{p} = - q-p \\
	    p+u = 0 \\
	    \big( q(a),p(b) \big) = ( A,0 ).
        \end{array}
\right.
\end{equation}
Hence, with $p+u = 0$, it is sufficient to solve the following linear Cauchy problem:
\begin{equation}\label{eq41-1}
 \left\lbrace \begin{array}{l}
 		\dot{q} = q+u \\
 		\dot{u} = q-u \\
	    \big( q(a),u(b) \big) = ( A,0 ).
        \end{array}
\right.
\end{equation}
Using matrix exponentials, one can prove that it exists an unique solution $(q,u)$ to Cauchy problem \eqref{eq41-1} given by:
\begin{equation}
\forall t \in [a,b], \; \left\lbrace \begin{array}{l}
 		q(t) = A \Big[ \cosh \big( \sqrt{2}(t-a)\big)+\dfrac{1-R}{\sqrt{2}} \sinh \big( \sqrt{2}(t-a)\big) \Big] \\[10pt]
 		u(t) = A \Big[\dfrac{1+R}{\sqrt{2}} \sinh \big( \sqrt{2}(t-a)\big)- R \cosh \big( \sqrt{2}(t-a)\big) \Big]
        \end{array}
\right.
\end{equation}
where
\begin{equation}
R=\dfrac{\sinh \big( \sqrt{2}(b-a)\big)}{\sqrt{2}\cosh \big( \sqrt{2}(b-a)\big)-\sinh \big( \sqrt{2}(b-a)\big)} .
\end{equation}
Finally, we conclude that $u$ is the unique critical point of $\LL^1$. Hence, if there exists $\xi$ minimizing \eqref{eq41-0}, then:
\begin{equation}
\forall t \in [a,b], \; \xi(t) = \xi_X (t) + u(t).
\end{equation}

\subsection{The fractional linear quadratic example}\label{section42}
In this section, we consider the previous example of Section \ref{section41} in the strict fractional case $0<\alpha <1$. Let us consider $d=m=1$ and the couple $(L,f)$ given in Equation \eqref{eq41-00}. We want to express a critical point $u$ of the cost functional $\LL^\alpha$ associated to $L$ and $f$. See Section \ref{section22} for notations and definitions. \\

Therefore, according to Corollary \ref{corfinal}, we look for $(q,u,p)$ solution of the fractional Pontryagin's system \eqref{eqps} here given by:
\begin{equation}\label{eq42-0}
 \left\lbrace \begin{array}{l}
 		\cDM q = q+u \\
 		\DP p = q+p \\
	    p+u = 0 \\
	    \big( q(a),p(b) \big) = ( A,0 ).
        \end{array}
\right.
\end{equation}
Hence, from $u+p=0$, it is sufficient to solve the following linear fractional Cauchy problem:
\begin{equation}\label{eq42-1}
 \left\lbrace \begin{array}{l}
 		\cDM q = q+u \\
 		\DP u = u-q \\
	    \big( q(a),u(b) \big) = ( A,0 ).
        \end{array}
\right.
\end{equation}
Although the fractional Cauchy problem \eqref{eq42-1} is linear, the presence of the left and the right fractional derivatives is a main drawback to the explicit computation of a solution. More generally, this characteristic implies very big difficulties in order to solve the most of fractional Hamiltonian systems. Finally, the resolution of \eqref{eqps} is the most of time still an opened problem. \\

Nevertheless, in this case, we can hope getting informations on the solutions of \eqref{eqps} in dimension $d=m=2$ with the help of the fractional Noether's theorem proved in Section \ref{section5}. Indeed, in this case, the fractional Pontryagin's system \eqref{eqps} admits a symmetry and a constant of motion can be obtained. We refer to Section \ref{section5} for more details.

\subsection{A solved fractional example}\label{section43} 
In this section, we give a solved fractional example in the sense that we give explicitly the unique critical point of the cost functional $\LL^\alpha$. Nevertheless, the fractional Pontryagin's system \eqref{eqps} is still not completely resolved. \\

Let us take $d=m=1$, $[a,b]=[0,1]$ and $0<\alpha \leq 1$. Let us consider the following Lagrangian $L$ and the following linear constraint function $f$:
\begin{equation}\label{eq43-1}
\fonction{L}{\R^2 \times [0,1]}{\R}{(x,v,t)}{(v^2/2) + \gamma (1-t)^\beta x} \quad \text{and} \quad \fonction{f}{\R^2 \times [0,1]}{\R}{(x,v,t)}{\lambda x + \mu v,}
\end{equation}
where $\beta$, $\gamma$, $\mu$, $\lambda$ are parameters in $\R^*$. We are looking for a critical point $u$ of $\LL^\alpha$ associated to $L$ and $f$. See Section \ref{section22} for notations and definitions. \\

According to Corollary \ref{corfinal}, we have to solve the fractional Pontryagin's system \eqref{eqps} here given by:
\begin{equation}
 \left\lbrace \begin{array}{l}
 		\cDM q = \lambda q + \mu u \\
 		\DP p = \lambda p + \gamma (1-t)^\beta \\
	    u + \mu p = 0 \\
	    \big( q(0),p(1) \big) = ( A,0 ).
        \end{array}
\right.
\end{equation}
From Theorem \ref{thmfcl}, the Cauchy problem $\cDM q = \lambda q + \mu u $ with $q(0)=A $ admits an unique solution for any $u \in \CC^0([0,1],\R)$. Then, from $u + \mu p = 0$, it is sufficient to solve the following fractional Cauchy problem:
\begin{equation}
 \left\lbrace \begin{array}{l}
 		\DP p = \lambda p + \gamma (1-t)^\beta \\
	    p(1) = 0
        \end{array}
\right. \quad \text{equivalent to} \quad \left\lbrace \begin{array}{l}
 		\DM p_0 = \lambda p_0 + \gamma t^\beta \\
	    p_0(0) = 0,
        \end{array}
\right.
\end{equation}
with the change of unknown $p_0(t) = p(1-t) $ for any $t \in [0,1]$. The unique solution of this last fractional Cauchy problem is given in \cite[Chap.3, p.137]{kilb} by:
\begin{equation}
\forall t \in [0,1], \;  p_0 (t) = \di \int_0^t (t-y)^{\alpha-1} {\rm E}_{\alpha,\alpha} \big( \lambda (t-y)^\alpha \big) \gamma y^\beta \; dy,
\end{equation}
where ${\rm E}_{\alpha,\alpha}$ is the Mittag-Leffler function defined in Appendix \ref{appAa}. In order to get a better formulation, we make a change of variable which gives us:
\begin{eqnarray}
\forall t \in [0,1], \;  p_0 (t) & = & \gamma \di \int_0^t (t-y)^{\beta} y^{\alpha-1} {\rm E}_{\alpha,\alpha} ( \lambda y^\alpha ) dy \\
& = & \gamma \Gamma (\beta +1) I^{\beta +1}_- \big( y^{\alpha-1} {\rm E}_{\alpha,\alpha} ( \lambda y^\alpha ) \big) (t) \\[5pt]
& = & \gamma \Gamma (\beta +1) t^{\alpha+\beta} {\rm E}_{\alpha,\alpha+\beta+1} ( \lambda t^\alpha ).
\end{eqnarray}
We refer to \cite{kilb} for more details concerning the calculations. Finally, we obtain the unique critical point $u$ of $\LL^\alpha$ given by: 
\begin{equation}
\forall t \in [0,1], \; u(t) = - \mu \gamma \Gamma (\beta +1) (1-t)^{\alpha+\beta} {\rm E}_{\alpha,\alpha+\beta+1} \big( \lambda (1-t)^\alpha \big) .
\end{equation}
Although we obtain the unique critical point $u$ of $\LL^\alpha$, let us note that this example does not provide a completely solved fractional Pontryagin's system: the state variable $q^{u,\alpha}$ is still unknown. However, this example allows to test the quality of numerical schemes giving approximations of critical points of cost functionals $\LL^\alpha$. This will be done in a further paper for a variational integrator of \eqref{eqps}.

\section{A fractional Noether's theorem}\label{section5}
As we have seen with concrete examples in Sections \ref{section42} and \ref{section43}, fractional Pontryagin's systems \eqref{eqps} are the most of time not resolvable explicitly. This is a strong obstruction in order to express explicitly a critical point of a cost functional $\LL^\alpha$. In this section, we suggest a deviously way in order to get informations on the solutions of \eqref{eqps} and consequently on the critical points of the cost functional $\LL^\alpha$ associated. Precisely, we are interested in the existence of conservation laws for fractional Pontryagin's systems admitting a symmetry (see Definition \ref{defsym}). \\

Indeed, in 1918, Noether proved the existence of an explicit conservation law for any classical Euler-Lagrange equation admitting a symmetry. We refer to \cite{arno,noet2,noet} for more details. Adapting her strategy to the fractional case, Cresson, Torres and Frederico proved in 2007 a preliminary result giving a conservation law for any fractional Euler-Lagrange equation admitting a symmetry, see \cite{cres6,torr3,torr4}. Nevertheless, the conservation law obtained was not explicit but only given implicitly via a functional relation. From this first result, we have formulated in \cite{bour2} a fractional Noether's theorem providing an \textit{explicit} formulation of this conservation law via a \textit{transfer formula}. \\

At the same time, in \cite{torr,torr2}, Torres and Frederico applied a similar strategy for fractional Pontryagin's system admitting a symmetry. Nevertheless, this conservation law is also given implicitly via a functional relation. In this section, we are going to formulate a fractional Noether's theorem providing an \textit{explicit} formulation of this conservation law via an other \textit{transfer formula}. \\

We first review the definition of a one parameter group of diffeomorphisms: 
\begin{definition}
Let $n \in \N^*$. For any real $s$, let $\fonctionsansdef{\phi (s,\cdot)}{\R ^n}{\R ^n}$ be a diffeomorphism. Then, $\Phi = \{ \phi (s,\cdot) \}_{s \in \R}$ is a one parameter group of diffeomorphisms of $\R^n$ if it satisfies:
\begin{enumerate}
\item $\phi (0,\cdot) = Id_{\R ^n}$; 
\item $\forall s,s' \in \R, \; \phi (s,\cdot) \circ \phi (s',\cdot) = \phi (s+s',\cdot) $;
\item $\phi$ is of class $\CC^2$.
\end{enumerate}
\end{definition}

Usual examples of one parameter groups of diffeomorphisms are given by translations and rotations. The action of three one parameter groups of diffeomorphisms on an Hamiltonian allows to define the notion of a symmetry for a fractional Pontryagin's system:

\begin{definition}\label{defsym}
For $i=1,2,3$, let $\Phi_i = \{ \phi_i (s,\cdot) \}_{s \in \R}$ be a one parameter group of diffeomorphisms of $\R^d$, $ \R^m$ and $\R^d$ respectively. Let $L$ be a Lagrangian, $f$ be a constraint function and $H$ be the associated Hamiltonian. $H$ is said to be $\cDM$-invariant under the action of $(\Phi_i)_{i=1,2,3}$ if it satisfies for any $(q,u,p)$ solution of \eqref{eqps} and any $s \in \R$:
\begin{equation}
H \Big( \phi_1 (s,q), \phi_2 (s,u) , \phi_3 (s,p) ,t \Big) - \phi_3 (s,p) \cdot \cDM \big( \phi_1 (s,q) \big) = H(q,u,p,t) - p \cdot \cDM q.
\end{equation}
\end{definition}

From this notion, Torres and Frederico proved in \cite{torr,torr2} the following result:

\begin{lemma}\label{lemtorr}
Let $L$ be a Lagrangian, $f$ be a constraint function and $H$ be the associated Hamiltonian. Let us assume that $H$ is $\cDM$-invariant under the action of three one parameter groups of diffeomorphisms $(\Phi_i)_{i=1,2,3}$. Then, the following equality holds for any solution $(q,u,p)$ of \eqref{eqps}:
\begin{equation}\label{eqlemtorr}
\cDM \left( \dfrac{\partial \phi_1}{\partial s} (0,q) \right) \cdot p - \dfrac{\partial \phi_1}{\partial s} (0,q) \cdot \DP p = 0.
\end{equation}
\end{lemma}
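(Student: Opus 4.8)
The plan is to run the usual Noether argument at the infinitesimal level. Since the $\cDM$-invariance equality of Definition \ref{defsym} holds for every $s \in \R$, while its right-hand side $H(q,u,p,t) - p \cdot \cDM q$ is manifestly independent of $s$, the derivative with respect to $s$ of the left-hand side, taken at $s=0$, must vanish. First I would therefore differentiate that left-hand side in $s$ and evaluate at $s=0$, exploiting $\phi_i(0,\cdot) = Id$ for $i=1,2,3$.

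Differentiating the Hamiltonian term by the chain rule gives
\begin{equation*}
\dfrac{\partial H}{\partial x}(q,u,p,t) \cdot \dfrac{\partial \phi_1}{\partial s}(0,q) + \dfrac{\partial H}{\partial v}(q,u,p,t) \cdot \dfrac{\partial \phi_2}{\partial s}(0,u) + \dfrac{\partial H}{\partial w}(q,u,p,t) \cdot \dfrac{\partial \phi_3}{\partial s}(0,p),
\end{equation*}
while differentiating the bilinear term $\phi_3(s,p) \cdot \cDM \big( \phi_1(s,q) \big)$ by the product rule yields $\frac{\partial \phi_3}{\partial s}(0,p) \cdot \cDM q + p \cdot \cDM \big( \frac{\partial \phi_1}{\partial s}(0,q) \big)$, where I have used $\phi_1(0,q)=q$ and $\phi_3(0,p)=p$.

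Next I would feed in the three equations of the fractional Pontryagin's system \eqref{eqps}. The fractional stationary equation $\frac{\partial H}{\partial v}(q,u,p,t) = 0$ annihilates the $\phi_2$ contribution. The state equation $\cDM q = \frac{\partial H}{\partial w}(q,u,p,t)$ makes the $\phi_3$ term coming from the Hamiltonian part cancel exactly against $\frac{\partial \phi_3}{\partial s}(0,p) \cdot \cDM q$, using the symmetry of the Euclidean inner product. Only two terms then survive, and replacing the last Hamiltonian derivative via the adjoint equation $\DP p = \frac{\partial H}{\partial x}(q,u,p,t)$ leaves $\DP p \cdot \frac{\partial \phi_1}{\partial s}(0,q) - p \cdot \cDM \big( \frac{\partial \phi_1}{\partial s}(0,q) \big) = 0$, which is exactly \eqref{eqlemtorr} after multiplying by $-1$.

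The hard part will be justifying the commutation $\frac{d}{ds}\big|_{s=0} \cDM \big( \phi_1(s,q) \big) = \cDM \big( \frac{\partial \phi_1}{\partial s}(0,q) \big)$, that is, differentiating the Caputo operator in the parameter $s$. Since $\cDM$ is a fractional integral followed by a time derivative, this reduces to a differentiation-under-the-integral-sign argument; I would justify it from the $\CC^2$ regularity of $\phi_1$ built into the definition of a one parameter group of diffeomorphisms together with the regularity of the solution $q$ of \eqref{eqps}, which ensures both that $\frac{\partial \phi_1}{\partial s}(0,q)$ admits a Caputo derivative and that $\cDM$, being linear, passes through the $s$-derivative.
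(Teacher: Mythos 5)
Your argument is correct and is exactly the standard infinitesimal-invariance computation: the paper itself gives no proof of Lemma \ref{lemtorr} but simply cites Torres and Frederico, and their proof is precisely this differentiation of the invariance identity at $s=0$ followed by substitution of the three equations of \eqref{eqps}. The cancellations you identify (the stationary equation killing the $\phi_2$ term, the state equation cancelling the $\phi_3$ contributions) are the right ones, and your flagged concern about commuting $\partial/\partial s$ with $\cDM$ is legitimately the only point needing care; it is handled as you suggest by the $\CC^2$ regularity of $\phi_1$ and differentiation under the integral sign.
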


Then, from this result, Torres and Frederico defined a notion of \textit{fractional-conserved quantity}. Nevertheless, this result did not provide exactly a constant of motion. In this section, using a transfer formula, we are going to write the left term of Equation \eqref{eqlemtorr} as an explicit classical derivative and then we obtain a real constant of motion. Let us provide this transfer formula:

\begin{lemma}[Transfer formula]\label{lemtransfer}
Let $g_1$, $g_2 \in \CC^{\infty} ([a,b],\R^d)$ satisfying the following condition {\rm (C)}:
\begin{center}
the sequences of functions $ \big( I^{p-\alpha}_- \big(g_1-g_1(a)\big) \cdot g_2^{(p)} \big)_{p \in \N^*} $ and \\ $ ( g_1^{(p)} \cdot I^{p-\alpha}_+ g_2 )_{p \in \N^*} $ converge uniformly to $0$ on $ [a,b] $.
\end{center}
Then, the following equality holds:
\begin{equation}\label{eqlemtransfer}
\cDM g_1 \cdot g_2 - g_1 \cdot \DP g_2 = \dfrac{d}{dt} \left[ \di \sum_{r=0}^{\infty} (-1)^r I^{r+1-\alpha}_- \big(g_1-g_1(a)\big) \cdot g_2^{(r)} + g_1^{(r)} \cdot I^{r+1-\alpha}_+ g _2\right]. 
\end{equation}
\end{lemma}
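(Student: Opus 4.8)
The plan is to establish \eqref{eqlemtransfer} by differentiating the two series inside the bracket term by term, the decisive feature being that, once a \emph{finite} partial sum is differentiated, almost all contributions telescope and only a single remainder survives, which is exactly the quantity controlled by condition {\rm (C)}. Write the bracket as $B = B^{(1)} + B^{(2)}$, where
\[
B^{(1)} = \sum_{r=0}^{\infty} (-1)^r I^{r+1-\alpha}_- \big(g_1-g_1(a)\big) \cdot g_2^{(r)}, \qquad B^{(2)} = \sum_{r=0}^{\infty} g_1^{(r)} \cdot I^{r+1-\alpha}_+ g_2 ,
\]
gather respectively the left- and right-integral terms. Since $g_1,g_2 \in \CC^{\infty}([a,b],\R^d)$, every partial sum $B^{(1)}_N$, $B^{(2)}_N$ is of class $\CC^1$ and may be differentiated term by term.

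First I would differentiate $B^{(1)}_N$. Using the product rule together with the elementary rule $\frac{d}{dt} I^{\beta}_- = I^{\beta-1}_-$ for $\beta \geq 1$ (a consequence of the semi-group Property \ref{prop1}) on the terms $r \geq 1$, and the very definition $\cDM g_1 = \DM\big(g_1 - g_1(a)\big) = \frac{d}{dt} I^{1-\alpha}_-\big(g_1 - g_1(a)\big)$ on the term $r=0$, the derivative splits into a ``main'' family and a ``cross'' family whose summation indices differ by one. A reindexing shows that these two families cancel except for one surviving boundary contribution, so that
\[
\frac{d}{dt} B^{(1)}_N = \cDM g_1 \cdot g_2 + (-1)^N I^{N+1-\alpha}_-\big(g_1 - g_1(a)\big) \cdot g_2^{(N+1)} .
\]
An entirely analogous computation for $B^{(2)}_N$, now using $\frac{d}{dt} I^{\beta}_+ = -I^{\beta-1}_+$ and $\DP g_2 = -\frac{d}{dt} I^{1-\alpha}_+ g_2$, yields
\[
\frac{d}{dt} B^{(2)}_N = - g_1 \cdot \DP g_2 + g_1^{(N+1)} \cdot I^{N+1-\alpha}_+ g_2 .
\]

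Next I would let $N \to \infty$. Setting $p = N+1$, the two remainders are precisely the general terms of the sequences appearing in condition {\rm (C)}, hence they converge uniformly to $0$ on $[a,b]$; consequently $\frac{d}{dt} B^{(1)}_N$ and $\frac{d}{dt} B^{(2)}_N$ converge uniformly to $\cDM g_1 \cdot g_2$ and $-g_1 \cdot \DP g_2$ respectively. To apply the classical theorem on differentiation of a uniformly convergent sequence of $\CC^1$ functions, it then remains only to verify convergence of each partial sum at one point: evaluating $B^{(1)}_N$ at $t=a$ kills every term (since $I^{\beta}_-(\cdot)(a)=0$), and evaluating $B^{(2)}_N$ at $t=b$ kills every term (since $I^{\beta}_+(\cdot)(b)=0$). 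Therefore both series converge uniformly and the derivative of each equals the corresponding uniform limit; adding the two gives $\frac{d}{dt} B = \cDM g_1 \cdot g_2 - g_1 \cdot \DP g_2$, which is exactly \eqref{eqlemtransfer}.

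The routine parts are the reindexing that produces the telescoping and the verification of the rules $\frac{d}{dt} I^{\beta}_- = I^{\beta-1}_-$ and $\frac{d}{dt} I^{\beta}_+ = -I^{\beta-1}_+$. The genuine obstacle — and the sole reason for imposing hypothesis {\rm (C)} — is the interchange of $\frac{d}{dt}$ with the infinite summation. The telescoping is what renders this tractable: it collapses the entire error of the interchange into the single order-$N$ remainder, so that controlling that one term, which is exactly what {\rm (C)} guarantees, is enough to conclude.
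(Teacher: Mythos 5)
Your argument is correct, but it is genuinely more than what the paper does: the paper's entire proof consists of citing \cite{bour2} for the two identities
$\DM g_1 \cdot g_2 = \frac{d}{dt}\bigl[\sum_r (-1)^r I^{r+1-\alpha}_- g_1 \cdot g_2^{(r)}\bigr]$ and
$-g_1 \cdot \DP g_2 = \frac{d}{dt}\bigl[\sum_r g_1^{(r)} \cdot I^{r+1-\alpha}_+ g_2\bigr]$,
and then observing that $\cDM g_1 = \DM\bigl(g_1 - g_1(a)\bigr)$, whereas you reconstruct the content of that external reference from scratch. Your telescoping computation is right: differentiating the $N$-th partial sum term by term, the product rule produces two families whose indices are shifted by one, and after reindexing everything cancels except the $r=0$ contribution (which, via $\frac{d}{dt} I^{1-\alpha}_-\bigl(g_1-g_1(a)\bigr) = \cDM g_1$ and $\frac{d}{dt} I^{1-\alpha}_+ g_2 = -\DP g_2$, gives exactly the left-hand side of \eqref{eqlemtransfer}) and the single order-$N$ remainder $(-1)^N I^{N+1-\alpha}_-\bigl(g_1-g_1(a)\bigr)\cdot g_2^{(N+1)}$, respectively $g_1^{(N+1)}\cdot I^{N+1-\alpha}_+ g_2$; these are precisely the sequences killed by Condition (C). Your handling of the interchange of $\frac{d}{dt}$ with the sum is also the right one: uniform convergence of the differentiated partial sums plus convergence of the partial sums at a single point ($t=a$ for the left series, $t=b$ for the right one, where all fractional integrals of positive order vanish) is exactly what the classical theorem on differentiating sequences of $\CC^1$ functions requires, and you correctly identify this interchange as the sole purpose of hypothesis (C). In short: the approach is the intended one (it is essentially the proof of \cite{bour2} adapted to the Caputo shift $g_1 \mapsto g_1 - g_1(a)$), but you supply the details the paper delegates to the reference, which makes your write-up self-contained where the paper's is not.
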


\begin{proof}
In \cite{bour2}, under a similar condition than Condition {\rm (C)}, we have proved:
\begin{equation}\label{}
\DM g_1 \cdot g_2 = \dfrac{d}{dt} \left[ \di \sum_{r=0}^{\infty} (-1)^r I^{r+1-\alpha}_- g_1 \cdot g_2^{(r)} \right] \; \text{and} \; - g_1 \cdot \DP g_2 = \dfrac{d}{dt} \left[ \di \sum_{r=0}^{\infty} g_1^{(r)} \cdot I^{r+1-\alpha}_+ g _2\right]. 
\end{equation}
Consequently, writing $\cDM g_1 = \DM \big( g_1 - g_1(a) \big)$, the proof is completed. 
\end{proof}

A discussion is provided in \cite{bour2} concerning the condition (C): one can prove that this condition is satisfied for any couple of analytic functions for example. Then, combining Lemmas \ref{lemtorr} and \ref{lemtransfer}, we prove:

\begin{theorem}[A fractional Noether's theorem]\label{thmnoetherfrac}
Let $L$ be a Lagrangian, $f$ be a constraint function and $H$ be the associated Hamiltonian. Let us assume that $H$ is $\cDM$-invariant under the action of three one parameter groups of diffeomorphisms $(\Phi_i)_{i=1,2,3}$. Let $(q,u,p)$ be a solution of \eqref{eqps} and let $g$ denote $\partial \phi_1 / \partial s( 0,q )$. If $g$ and $p$ satisfy Condition {\rm (C)}, then the following equality holds:
\begin{equation}\label{eqthmnoetherfrac}
\dfrac{d}{dt} \left[ \di \sum_{r=0}^{\infty} (-1)^r I^{r+1-\alpha}_- \big(g-g(a)\big) \cdot p^{(r)} + g^{(r)} \cdot I^{r+1-\alpha}_+ p \right] = 0 .
\end{equation}
\end{theorem}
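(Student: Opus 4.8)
The plan is to chain the two preceding lemmas: Lemma~\ref{lemtorr} supplies the fractional conservation identity forced by the symmetry, while the transfer formula of Lemma~\ref{lemtransfer} rewrites the left-hand side of that identity as a genuine classical derivative. The substantive analytic work is already packaged in those two lemmas, so the theorem amounts essentially to their composition.

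First I would invoke Lemma~\ref{lemtorr}. Since $H$ is assumed to be $\cDM$-invariant under the action of $(\Phi_i)_{i=1,2,3}$ and $(q,u,p)$ is a solution of \eqref{eqps}, that lemma yields directly
\begin{equation}
\cDM g \cdot p - g \cdot \DP p = 0,
\end{equation}
where $g = \partial \phi_1 / \partial s(0,q)$. This is the entire dynamical input of the argument: no further use of the fractional Pontryagin's system is needed beyond what Lemma~\ref{lemtorr} has already extracted from the symmetry.

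Next I would apply Lemma~\ref{lemtransfer} with the choice $g_1 = g$ and $g_2 = p$. This substitution is legitimate precisely because the hypothesis of the theorem assumes that the couple $(g,p)$ satisfies Condition~{\rm (C)}, which already presupposes enough regularity to form every derivative $g^{(r)}$ and $p^{(r)}$ appearing in the series. The transfer formula then gives
\begin{equation}
\cDM g \cdot p - g \cdot \DP p = \dfrac{d}{dt}\left[\di \sum_{r=0}^{\infty} (-1)^r I^{r+1-\alpha}_- \big(g-g(a)\big)\cdot p^{(r)} + g^{(r)}\cdot I^{r+1-\alpha}_+ p\right].
\end{equation}
Comparing the two displays, the left-hand side vanishes, so the bracketed expression has zero time-derivative, which is exactly the claimed conservation law \eqref{eqthmnoetherfrac}.

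I do not expect a serious obstacle here, since the heavy lifting (the symmetry computation behind Lemma~\ref{lemtorr} and the integration-by-parts expansion behind Lemma~\ref{lemtransfer}) has already been carried out. The only point deserving genuine care is verifying that $(g,p)$ is an admissible input for the transfer formula, namely that both functions are smooth enough and that the two sequences in Condition~{\rm (C)} converge uniformly to zero; but this is precisely the standing hypothesis of the theorem, so the verification reduces to quoting it. The argument is therefore clean once one is confident that the order of summation and the termwise differentiation implicit in the transfer formula are justified under Condition~{\rm (C)}, which is exactly what Lemma~\ref{lemtransfer} guarantees.
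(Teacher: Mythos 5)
Your proposal is correct and is exactly the paper's argument: the paper itself proves the theorem simply by "combining Lemmas \ref{lemtorr} and \ref{lemtransfer}", i.e.\ substituting $g_1=g$, $g_2=p$ into the transfer formula and using the vanishing of $\cDM g\cdot p-g\cdot \DP p$ supplied by Lemma \ref{lemtorr}. Your remark that Condition (C) is precisely the standing hypothesis needed to legitimize this substitution matches the paper's treatment.
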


This theorem then provides an explicit algorithmic way to compute a constant of motion for any fractional Pontryagin's systems admitting a symmetry. Nevertheless, the conservation law is only given as a series of functions: in most cases, it is not easily computable. However, an arbitrary closed approximation of this quantity can be obtained with a truncation. \\

Let us note that the fractional linear-quadratic example developed in Section \ref{section42} gives a concrete example of fractional Pontryagin's system admitting a symmetry:
\begin{example}\label{ex4}
Let us consider $d = m = 2$, the following quadratic Lagrangian and the following linear constraint function
\begin{equation}
\fonction{L}{\R^2 \times \R^2 \times [0,1]}{\R}{(x,v,t)}{(\Vert x \Vert^2 + \Vert v \Vert^2 )/2} \quad \text{and} \quad \fonction{f}{\R^2 \times \R^2 \times [0,1]}{\R^2}{(x,v,t)}{x+v.}
\end{equation}
Then, let us consider the three one parameter groups of diffeomorphisms given by the following rotations:
\begin{equation}\label{rotation}
\fonction{\phi_i}{\R \times \R^2}{\R^2}{(s,x_1,x_2)}{\left( \begin{array}{cc} \cos (s \theta_i) & - \sin (s \theta_i) \\ \sin (s \theta_i) & \cos (s \theta_i)  \end{array} \right) \left( \begin{array}{c} x_1 \\ x_2 \end{array} \right),}
\end{equation}
for $i=1,2,3$ and where $\theta_1$, $\theta_2 \in \R$ and $\theta_3 = - \theta_1$. With these parameters, one can prove that the Hamiltonian $H$ associated to $L$ and $f$ is $\cDM$-invariant under the action of $(\Phi_i)_{i=1,2,3}$. Consequently, the fractional Pontryagin's system given in \eqref{eq42-0} (in dimension $2$) is not resolvable but admits a symmetry and then admits an explicit conservation law given by the fractional Noether's Theorem \ref{thmnoetherfrac}. As said previously, this constant of motion is not explicitly computable. However, truncating the series, one can provide an approximation of this quantity.
\end{example}

\appendix

\section{Appendix}\label{appA}

\subsection{A fractional Gronwall's lemma}\label{appAa}
Let us recall the definition extracted from \cite{kilb,podl,samk} of the Mittag-Leffler function ${\rm E}_{\alpha_1,\alpha_2}$ of parameters $\alpha_1$, $\alpha_2 \geq 0$:
\begin{equation}
\forall t \in \R, \; {\rm E}_{\alpha_1,\alpha_2} (t) := \di \sum_{k=0}^{\infty} \dfrac{t^k}{\Gamma (\alpha_1 k + \alpha_2)}.
\end{equation}
Now, let us give the following Lemma proved in \cite{diet,gao}. For the reader's convenience, we recall the proof:
\begin{lemma}[A fractional Gronwall's lemma]\label{lemgronf}
Let $g \in \CC^0 ([a,b],\R)$ and $\alpha >0$. Let us assume that there exist $K_1$, $K_2 \geq 0 $ such that:
\begin{equation}\label{eqAa-1}
\forall t \in [a,b], \; 0 \leq g(t) \leq K_1 I^\alpha_- g(t) + K_2.
\end{equation}
Then, $g$ satisfies:
\begin{equation}\label{eqAa-2}
\forall t \in [a,b],\;  0 \leq g(t) \leq K_2 {\rm E}_{\alpha,1} \big( K_1 (t-a)^\alpha \big).
\end{equation}
\end{lemma}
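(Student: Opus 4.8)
The plan is to iterate the hypothesis \eqref{eqAa-1}, treating its right-hand side as a positive linear operator applied to $g$ plus a constant source term, and then to recognise the resulting series as a Mittag-Leffler function. Introduce the operator $B$ defined on $\CC^0([a,b],\R)$ by $B\phi := K_1 I^\alpha_- \phi$. Two elementary facts drive the argument: $B$ is \emph{linear}, since $I^\alpha_-$ is; and $B$ is \emph{order-preserving on nonnegative functions}, because the kernel $(t-y)^{\alpha-1}/\Gamma(\alpha)$ is nonnegative and $K_1 \geq 0$, so that $0 \leq \phi \leq \psi$ on $[a,b]$ implies $0 \leq B\phi \leq B\psi$.

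With this notation, \eqref{eqAa-1} reads $0 \leq g \leq Bg + K_2$, where $K_2$ denotes the constant function. Applying $B$ and using monotonicity and linearity gives $Bg \leq B^2 g + BK_2$, whence $g \leq B^2 g + BK_2 + K_2$. First I would establish by induction that for every $n \in \N^*$,
\[
\forall t \in [a,b], \; 0 \leq g(t) \leq B^n g(t) + \di \sum_{k=0}^{n-1} B^k K_2 (t).
\]
Next I would evaluate the two contributions explicitly. By the semigroup property (Property \ref{prop1}) one has $B^k = K_1^k I^{k\alpha}_-$, and a direct computation of $I^{k\alpha}_-$ applied to the constant $K_2$ gives $B^k K_2 (t) = K_2 K_1^k (t-a)^{k\alpha}/\Gamma(k\alpha + 1)$; summing over $k$ therefore produces exactly $K_2 \, {\rm E}_{\alpha,1}\big( K_1 (t-a)^\alpha \big)$ in the limit.

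For the remainder, since $g$ is continuous it is bounded, say by $G := \Vert g \Vert_\infty$, and the nonnegativity of $g$ yields
\[
0 \leq B^n g(t) = K_1^n I^{n\alpha}_- g(t) \leq G \, K_1^n \dfrac{(t-a)^{n\alpha}}{\Gamma(n\alpha+1)},
\]
which is the general term of a convergent Mittag-Leffler series and hence tends to $0$ as $n \to \infty$, uniformly on $[a,b]$. Passing to the limit $n \to \infty$ in the induction inequality then gives the desired bound \eqref{eqAa-2}, the lower bound $0 \leq g$ being part of the hypothesis. I expect the only genuinely delicate point to be the verification that the remainder $B^n g$ vanishes: this rests on the boundedness of $g$ together with the decay of $K_1^n (b-a)^{n\alpha}/\Gamma(n\alpha+1)$, guaranteed precisely because the series defining ${\rm E}_{\alpha,1}$ converges everywhere. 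Everything else reduces to the linearity and positivity of the Riemann-Liouville integral and to the semigroup identity already recorded in Property \ref{prop1}.
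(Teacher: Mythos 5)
Your proposal is correct and follows essentially the same route as the paper's proof: iterate the inequality via the semigroup property (Property \ref{prop1}) to obtain $0 \leq g \leq K_1^n I^{n\alpha}_- g + K_2 \sum_{k=0}^{n-1} K_1^k (t-a)^{k\alpha}/\Gamma(k\alpha+1)$, kill the remainder using the boundedness of $g$ and the decay of $K_1^n(b-a)^{n\alpha}/\Gamma(n\alpha+1)$, and let $n \to \infty$. Your operator notation $B = K_1 I^\alpha_-$ and your appeal to convergence of the Mittag-Leffler series (where the paper cites Stirling's formula) are only cosmetic differences.
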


\begin{proof}
Using Property \ref{prop1}, Assumption \eqref{eqAa-1} implies by induction that for any $n \in \N^*$ and any $t \in [a,b]$:
\begin{equation}\label{eqAa-3}
0 \leq g(t) \leq K_1^n  I^{n \alpha}_- g(t) + K_2 \di \sum_{k=0}^{n-1} K_1^k I^{k\alpha}_- (1) (t) = K_1^n  I^{n \alpha}_- g(t) + K_2 \di \sum_{k=0}^{n-1} \dfrac{\big( K_1(t-a)^{\alpha}\big)^k}{\Gamma (\alpha k+1)}.
\end{equation}
Since $g$ is continuous on $[a,b]$ and non-negative, $g$ is bounded by $0$ and by a constant $K_3 \geq 0$. Hence, using the Stirling asymptotic formula, we have for any $t \in [a,b]$:
\begin{equation}
0 \leq K_1^n I^{n \alpha}_- g(t) = \dfrac{K_1^n}{\Gamma (\alpha n)} \di \int_a^t (t-y)^{n\alpha-1} g(y) dy  \leq  K_3 \dfrac{\big( K_1 (t-a)^{\alpha} \big)^n}{\Gamma (\alpha n+1)} \underset{n\to+\infty}{\longrightarrow}  0.
\end{equation}
Finally, making $n$ tend to $\infty$ in Inequality \eqref{eqAa-3}, the proof is completed.
\end{proof}
Let us note that ${\rm E}_{1,1}$ is the exponential function. Consequently, in the classical case $\alpha =1$, the fractional Gronwall's lemma is nothing else but the classical Gronwall's lemma.

\subsection{Result of stability of order $1$}\label{appAb}
In this section, we use the notations and definitions given in Section \ref{section22}. We prove the following Lemma with the help of Lemma \ref{lemgronf}.
\begin{lemma}\label{lemappA1}
Let $u$, $\bar{u} \in \CC^0 ([a,b],\R^m)$. Then, there exists a constant $C_1 \geq 0$ such that:
\begin{equation}
\forall \vert \eps \vert < 1, \; \forall t \in [a,b], \; \Vert q^{u+ \eps \bar{u},\alpha} (t) - q^{u,\alpha}(t) \Vert \leq C_1 \vert \eps \vert .
\end{equation}
\end{lemma}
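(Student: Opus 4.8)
The plan is to control the difference $q^{u+\eps\bar u,\alpha}-q^{u,\alpha}$ by setting up an integral inequality to which the fractional Gronwall's lemma (Lemma \ref{lemgronf}) can be applied. First I would write both state variables in their integral forms provided by Theorem \ref{thmfcl}, namely
\begin{equation}
q^{u+\eps\bar u,\alpha}(t) = A + I^\alpha_-\big( f(q^{u+\eps\bar u,\alpha},u+\eps\bar u,t)\big)(t), \qquad q^{u,\alpha}(t) = A + I^\alpha_-\big( f(q^{u,\alpha},u,t)\big)(t),
\end{equation}
and subtract them so that the two copies of $A$ cancel. Writing $\delta(t) := \Vert q^{u+\eps\bar u,\alpha}(t) - q^{u,\alpha}(t)\Vert$, I would then estimate the integrand $f(q^{u+\eps\bar u,\alpha},u+\eps\bar u,t)-f(q^{u,\alpha},u,t)$ by a triangle-inequality split into an $x$-variation term and a $v$-variation term.

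The $x$-variation term is handled directly by Condition \eqref{condf}, giving $M\,\delta(y)$ inside the integral, which after applying the positivity and monotonicity of $I^\alpha_-$ produces exactly a term of the form $M\,I^\alpha_-\delta(t)$. For the $v$-variation term I would use that $f$ is $\CC^2$, hence locally Lipschitz in $v$; since $u$ and $\bar u$ are continuous on the compact $[a,b]$ and $\vert\eps\vert<1$, the arguments $(q^{u,\alpha}(t),u(t)+\theta\eps\bar u(t),t)$ all lie in a fixed compact set, so there is a uniform constant bounding $\Vert f(q^{u,\alpha},u+\eps\bar u,t)-f(q^{u,\alpha},u,t)\Vert$ by $C\,\vert\eps\vert\,\Vert\bar u\Vert_\infty$. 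Combining the two contributions gives an inequality of the shape
\begin{equation}
0 \le \delta(t) \le M\, I^\alpha_- \delta(t) + K_2\,\vert\eps\vert,
\end{equation}
where $K_2$ depends on $\bar u$, the compact set, and the derivative bounds for $f$ but not on $\eps$ or $t$.

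At this point Lemma \ref{lemgronf} applies with $K_1=M$ and the given $K_2\vert\eps\vert$, yielding $\delta(t) \le K_2\vert\eps\vert\,{\rm E}_{\alpha,1}\big(M(b-a)^\alpha\big)$ for all $t\in[a,b]$. Setting $C_1 := K_2\,{\rm E}_{\alpha,1}\big(M(b-a)^\alpha\big)$, which is finite and independent of $\eps$, completes the argument. The main obstacle I anticipate is not the Gronwall step, which is mechanical, but making the constant in the $v$-variation bound genuinely uniform in $\eps$: one must confine all the relevant arguments to a single compact set valid for every $\vert\eps\vert<1$ before invoking the local Lipschitz property of $f$, and one should check that $q^{u+\eps\bar u,\alpha}$ itself stays in a bounded region independent of $\eps$ (which follows a posteriori from the very estimate being proved, or a priori from a preliminary crude bound). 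Everything else is routine manipulation of the fractional integral operators established in Section \ref{section12}.
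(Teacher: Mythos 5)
Your proposal follows essentially the same route as the paper: write both states in the integral (Volterra) form provided by Theorem \ref{thmfcl}, split the integrand by the triangle inequality into an $x$-variation term controlled by the global Lipschitz condition \eqref{condf} and a $v$-variation term bounded uniformly in $\eps$ by confining the arguments $(q^{u,\alpha},u+\theta\eps\bar u,t)$ to a fixed compact set (the paper does this via a first-order Taylor expansion with explicit remainder, which is the same device), and then apply the fractional Gronwall Lemma \ref{lemgronf} with $K_1=M$. Your closing concern about an a priori bound on $q^{u+\eps\bar u,\alpha}$ is in fact unnecessary: since \eqref{condf} is a \emph{global} Lipschitz condition in $x$, the only term requiring a compactness argument involves $q^{u,\alpha}$, $u$ and $\bar u$ alone, exactly as your own decomposition already arranges.
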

\begin{proof}
From Theorem \ref{thmfcl}, we have for any $\abs{\eps} < 1$ and any $t \in [a,b]$:
\begin{equation}
q^{u,\alpha} (t) = A + I^{\alpha}_- \big( f( q^{u,\alpha},u,t ) \big) (t) \quad \text{and} \quad q^{u+ \eps \bar{u},\alpha} (t) = A + I^{\alpha}_- \big( f( q^{u+ \eps \bar{u},\alpha},u+ \eps \bar{u},t ) \big) (t).
\end{equation}
Hence, for any $\abs{\eps} < 1$ and any $t \in [a,b]$, we have:
\begin{eqnarray}
\Vert q^{u+ \eps \bar{u},\alpha} (t) -  q^{u,\alpha} (t) \Vert & = & \Vert I^{\alpha}_- \big( f( q^{u+ \eps \bar{u},\alpha},u+ \eps \bar{u},t ) - f( q^{u,\alpha},u,t ) \big) (t) \Vert  \\ \label{eqAb-1}
& \leq & I^{\alpha}_- \Big( \Vert f( q^{u+ \eps \bar{u},\alpha},u+ \eps \bar{u},t ) - f( q^{u,\alpha},u,t ) \Vert \Big) (t) . \label{eqAb-2}
\end{eqnarray}
We have for any $\abs{\eps} < 1$ and any $y \in [a,b]$:
\begin{multline}
\Vert f\big( q^{u+ \eps \bar{u},\alpha}(y),u(y)+ \eps \bar{u}(y),y \big) - f\big( q^{u,\alpha}(y),u(y),y \big) \Vert \\ \leq \Vert f\big( q^{u+ \eps \bar{u},\alpha}(y),u(y)+ \eps \bar{u}(y),y \big) - f\big( q^{u,\alpha}(y),u(y)+ \eps \bar{u}(y),y \big) \Vert \\ +  \Vert f\big( q^{u,\alpha}(y),u(y)+ \eps \bar{u}(y),y \big) - f\big( q^{u,\alpha}(y),u(y),y \big) \Vert.
\end{multline}
From Condition \eqref{condf} and with a Taylor's expansion of order $1$ with explicit remainder, the following inequality holds for any $\abs{\eps} < 1$ and any $y \in [a,b]$:
\begin{multline}\label{eqAb-3}
\Vert f\big( q^{u+ \eps \bar{u},\alpha}(y),u(y)+ \eps \bar{u}(y),y \big) - f\big( q^{u,\alpha}(y),u(y),y \big) \Vert \\ \leq M \Vert q^{u+ \eps \bar{u},\alpha}(y) - q^{u,\alpha}(y)\Vert + \vert \eps \vert \left\Vert \dfrac{\partial f}{\partial v} \big( q^{u,\alpha}(y),\xi^{\eps}(y),y \big) \times \bar{u}(y) \right\Vert.
\end{multline}
where $\xi^{\eps}(y) \in [u(y),u(y)+ \eps \bar{u}(y)] \subset [-M_1,M_1]^m$ with $M_1 \geq 0$ independent of $\vert \eps \vert < 1$ since $u$ and $\bar{u}$ are continuous on $[a,b]$. \\

Thus, since $\partial f / \partial v$, $q^{u,\alpha}$ and $\bar{u}$ are continuous, there exists $M_2 \geq 0$ such that for any $\vert \eps \vert < 1$, we have:
\begin{multline}\label{eqAb-4}
\forall y \in [a,b], \; \left\Vert \dfrac{\partial f}{\partial v} \big( q^{u,\alpha}(y),\xi^{\eps}(y),y \big) \times \bar{u}(y) \right\Vert \leq M_2 \\ \text{and then, } \; \forall t \in [a,b], \; I^{\alpha}_- \left( \left\Vert \dfrac{\partial f}{\partial v} ( q^{u,\alpha},\xi^{\eps},t ) \times \bar{u} \right\Vert \right) (t) \leq M_3,
\end{multline}
where $M_3 := M_2 (b-a)^\alpha / \Gamma ( \alpha +1)$ is independent of $\vert \eps \vert < 1$. \\

Finally, from Inequalities \eqref{eqAb-2}, \eqref{eqAb-3} and \eqref{eqAb-4}, we have:
\begin{equation}
\forall \vert \eps \vert < 1, \forall t \in [a,b], \; \Vert q^{u+ \eps \bar{u},\alpha} (t) -  q^{u,\alpha} (t) \Vert \leq M  I^{\alpha}_- \Big( \Vert q^{u+ \eps \bar{u},\alpha} - q^{u,\alpha}\Vert \Big) (t) +  M_3 \vert \eps \vert.
\end{equation}
Using the fractional Gronwall's Lemma \ref{lemgronf}, we conclude that:
\begin{eqnarray}
\forall \vert \eps \vert < 1, \; \forall t \in [a,b], \; \Vert q^{u+ \eps \bar{u},\alpha} (t) -  q^{u,\alpha} (t) \Vert & \leq & M_3 \vert \eps \vert {\rm E}_{\alpha,1} \big( M (t-a)^\alpha \big) \\[10pt]
& \leq & M_3 \vert \eps \vert {\rm E}_{\alpha,1} \big( M (b-a)^\alpha \big).
\end{eqnarray}
Defining $C_1 := M_3 {\rm E}_{\alpha,1} \big( M (b-a)^\alpha \big)$, the proof is completed.
\end{proof}

\subsection{Result of stability of order $2$}\label{appAc}
In this section, we use the notations and definitions given in Section \ref{section22}. We prove the following Lemma with the help of Lemmas \ref{lemgronf} and \ref{lemappA1}.
\begin{lemma}\label{lemappA2}
Let $u$, $\bar{u} \in \CC^0 ([a,b],\R^m)$. There exists a constant $C \geq 0$ such that:
\begin{equation}
\forall \vert \eps \vert < 1, \; \forall t \in [a,b], \; \Vert q^{u+ \eps \bar{u},\alpha} (t) - q^{u,\alpha}(t) - \eps \bar{q}(t) \Vert \leq C \eps^2 .
\end{equation}
where $\bar{q}$ is the unique global solution of the following linearised Cauchy's problem:
\begin{equation}\tag{LCP${}^\alpha_{\bar{q}}$}
 \left\lbrace \begin{array}{l}
 		\cDM \bar{q} = \dfrac{\partial f}{\partial x} (q^{u,\alpha},u,t) \times \bar{q} + \dfrac{\partial f}{\partial v} (q^{u,\alpha},u,t) \times \bar{u} \\[10pt]
 		\bar{q}(a) = 0.
        \end{array}
\right. 
\end{equation}
The existence and the uniqueness of $\bar{q}$ are given by Theorem \ref{thmfcl} and Condition \eqref{condf}.
\end{lemma}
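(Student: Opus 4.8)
The plan is to follow the scheme of Lemma~\ref{lemappA1} but to carry the Taylor expansion of $f$ to second order and then to absorb the resulting quadratic remainder by means of the already-established first-order estimate. To lighten notation I would write $q_\eps := q^{u+\eps\bar u,\alpha}$ and $q := q^{u,\alpha}$, and introduce the second-order error $z_\eps := q_\eps - q - \eps\bar q$, so that the assertion becomes a uniform bound $\norm{z_\eps(t)} \leq C\eps^2$. First I would invoke Theorem~\ref{thmfcl} to represent $q_\eps$, $q$ and $\bar q$ through the operator $I^\alpha_-$ (using $\bar q(a)=0$ to write $\eps\bar q = \eps I^\alpha_-(\cdots)$); subtracting the three identities yields
\begin{equation}
z_\eps = I^\alpha_- \Big( f(q_\eps, u + \eps \bar u, t) - f(q,u,t) - \eps \big( \tfrac{\partial f}{\partial x}(q,u,t) \times \bar q + \tfrac{\partial f}{\partial v}(q,u,t) \times \bar u \big) \Big).
\end{equation}

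Next I would expand the integrand. Since $f$ is $\CC^2$, a Taylor expansion of order~$1$ with explicit remainder about $(q,u,t)$, with increments $q_\eps - q$ and $\eps\bar u$, gives
\begin{equation}
f(q_\eps, u + \eps \bar u, t) - f(q,u,t) = \tfrac{\partial f}{\partial x}(q,u,t) \times (q_\eps - q) + \tfrac{\partial f}{\partial v}(q,u,t) \times (\eps \bar u) + \mathcal{R}_\eps ,
\end{equation}
where $\mathcal{R}_\eps$ gathers the second-order terms. Substituting into the previous display, the two $\partial f/\partial v$ contributions cancel identically, and because $q_\eps - q - \eps\bar q = z_\eps$ the $\partial f/\partial x$ term collapses to $\partial f/\partial x (q,u,t) \times z_\eps$. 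This is the crucial cancellation; it leaves the closed relation
\begin{equation}
z_\eps = I^\alpha_- \Big( \tfrac{\partial f}{\partial x}(q,u,t) \times z_\eps + \mathcal{R}_\eps \Big).
\end{equation}

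It then remains to estimate $\mathcal{R}_\eps$ and to close the loop with Gronwall. Here I would argue that, since $u$, $\bar u$ and $q$ are continuous on $[a,b]$ and $\norm{q_\eps-q}\leq C_1\abs{\eps}$ uniformly for $\abs{\eps}<1$ by Lemma~\ref{lemappA1}, all the arguments of $f$ stay in a fixed compact set on which the second derivatives of $f$ are bounded; consequently $\norm{\mathcal{R}_\eps} \leq M'(\norm{q_\eps - q}^2 + \eps^2\norm{\bar u}^2) \leq M''\eps^2$, with constants independent of $\eps$. Since $\partial f/\partial x(q,u,t)$ is likewise bounded, say by $M_4$, taking norms and using the positivity of $I^\alpha_-$ gives
\begin{equation}
\norm{z_\eps(t)} \leq M_4 \, I^\alpha_-\big( \norm{z_\eps} \big)(t) + M'' \eps^2 \, \tfrac{(b-a)^\alpha}{\Gamma(\alpha+1)} .
\end{equation}
Applying the fractional Gronwall's Lemma~\ref{lemgronf} with $K_1 = M_4$ and $K_2 = M''\eps^2(b-a)^\alpha/\Gamma(\alpha+1)$ then produces $\norm{z_\eps(t)} \leq C\eps^2$ for a constant $C$ independent of $t$ and $\eps$, as desired. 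The main obstacle I anticipate is exactly the uniform-in-$\eps$ control of the quadratic remainder $\mathcal{R}_\eps$: it is what forces one to confine all the arguments of $f$ to a fixed compact set and to reuse the order-$1$ estimate of Lemma~\ref{lemappA1}; once that bound is secured, the remainder of the argument is a verbatim fractional adaptation of the classical second-order differentiability proof.
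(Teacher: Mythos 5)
Your proposal is correct and follows essentially the same route as the paper: the integral representation of $q^{u+\eps\bar u,\alpha}-q^{u,\alpha}-\eps\bar q$ via Theorem~\ref{thmfcl}, the Taylor expansion of $f$ with explicit second-order remainder (producing exactly the cancellation of the $\partial f/\partial v$ terms and the collapse of the $\partial f/\partial x$ term onto $z_\eps$), the uniform $O(\eps^2)$ bound on the remainder obtained by confining the arguments of $\nabla^2 f$ to a fixed compact set via Lemma~\ref{lemappA1}, and the closing application of the fractional Gronwall's Lemma~\ref{lemgronf}. The only differences are cosmetic (naming of the Taylor expansion's order, and writing an identity before passing to norms rather than an inequality directly).
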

\begin{proof}
We proceed in the same manner than for Lemma \ref{lemappA1}. We have for any $\abs{\eps} < 1$ and any $t \in [a,b]$:
\begin{multline}\label{eq654}
\Vert q^{u+\eps \bar{u},\alpha}(t) - q^{u,\alpha}(t) - \eps \bar{q}(t) \Vert \\ \leq I^\alpha_- \left( \left\Vert f(q^{u+\eps \bar{u},\alpha},u+\eps \bar{u},t)-f(q^{u,\alpha},u,t) - \eps \dfrac{\partial f}{\partial x}(q^{u,\alpha},u,t) \times \bar{q} - \eps \dfrac{\partial f}{\partial v}(q^{u,\alpha},u,t) \times \bar{u} \right\Vert \right)(t).
\end{multline}
With a Taylor's expansion's of order $2$ with explicit remainder, we have for any $\abs{\eps} < 1$ and any $y \in [a,b]$:
\begin{multline}\label{eq987}
\left\Vert f\big(q^{u+\eps \bar{u},\alpha}(y),u(y)+\eps \bar{u}(y),y\big)-f\big(q^{u,\alpha}(y),u(y),y\big) \right. \\ \left. - \eps \dfrac{\partial f}{\partial x}\big(q^{u,\alpha}(y),u(y),y\big) \times \bar{q}(y) - \eps \dfrac{\partial f}{\partial v}\big(q^{u,\alpha}(y),u(y),y\big) \times \bar{u}(y) \right\Vert  \\ \leq \left\Vert \dfrac{\partial f}{\partial x}\big(q^{u,\alpha}(y),u(y),y\big) \times \big(q^{u+\eps \bar{u},\alpha}(y)- q^{u,\alpha}(y)-\eps \bar{q}(y) \big) \right\Vert \\ + \left\Vert \dfrac{1}{2} \nabla^2 f \big(\xi^\eps_1 (y),\xi^\eps_2 (y),y\big)\big(q^{u+\eps \bar{u},\alpha}(y) - q^{u,\alpha}(y),\eps \bar{u}(y),0 \big)^2 \right\Vert ,
\end{multline}
where for any $y \in [a,b]$:
\begin{itemize}
\item $\xi^\eps_1 (y) \in [q^{u,\alpha}(y),q^{u+\eps \bar{u},\alpha}(y)] \subset [\Vert q^{u,\alpha} \Vert_\infty -C_1,\Vert q^{u,\alpha} \Vert_\infty +C_1 ]^d$ according to Lemma \ref{lemappA1}. Then, there exists $M_1 \geq 0$ such that $\xi^\eps_1 (y) \in [-M_1,M_1]^d$ for any $\abs{\eps} < 1$ and any $y \in [a,b]$;
\item $\xi^\eps_2 (y) \in [u(y),u(y)+\eps \bar{u}(y)] \subset [-M_2,M_2]^m$ with $M_2 \geq 0$ independent of $\vert \eps \vert < 1$ and $y \in [a,b]$ since $u$ and $\bar{u}$ are continuous on $[a,b]$.
\end{itemize}
Since $\nabla^2 f$ is continuous on the compact $[-M_1,M_1]^d \times [-M_2,M_2]^m \times [a,b]$, there exists $M_3 \geq 0$ such that for any $\vert \eps \vert < 1$ and any $y \in [a,b]$, we have:
\begin{multline}
\left\Vert \dfrac{1}{2} \nabla^2 f \big(\xi^\eps_1(y),\xi^\eps_2(y),y \big) \big( q^{u+\eps \bar{u},\alpha} (y) - q^{u,\alpha}(y),\eps \bar{u}(y),0)^2  \right\Vert \\ \leq M_3 \big( \Vert q^{u+\eps \bar{u},\alpha} (y) - q^{u,\alpha}(y) \Vert^2  + 2 \Vert q^{u+\eps \bar{u},\alpha} (y) - q^{u,\alpha}(y) \Vert \Vert \eps \bar{u}(y) \Vert + \Vert \eps \bar{u}(y) \Vert^2 \big). 
\end{multline}
Then, from Lemma \ref{lemappA1} and since $\bar{u}$ is continuous, there exists $M_4 \geq 0$ such that:
\begin{multline}
\forall \vert \eps \vert < 1, \; \forall y \in [a,b], \; \left\Vert \dfrac{1}{2} \nabla^2 f \big(\xi^\eps_1(y),\xi^\eps_2(y),y \big) \big( q^{u+\eps \bar{u},\alpha} (y) - q^{u,\alpha}(y),\eps \bar{u}(y),0)^2  \right\Vert \leq M_4 \eps^2 .
\end{multline}
Consequently, from Inequality \eqref{eq987} and since $f$ satisfies Condition \eqref{condf} (implying $\Vert \partial f / \partial x \Vert_{\infty} \leq M$), we have for any $\vert \eps \vert < 1$ and any $y \in [a,b]$:
\begin{multline}
\left\Vert f\big(q^{u+\eps \bar{u},\alpha}(y),u(y)+\eps \bar{u}(y),y\big)-f\big(q^{u,\alpha}(y),u(y),y\big) \right. \\ \left. - \eps \dfrac{\partial f}{\partial x}\big(q^{u,\alpha}(y),u(y),y\big) \times \bar{q}(y) - \eps \dfrac{\partial f}{\partial v}\big(q^{u,\alpha}(y),u(y),y\big) \times \bar{u}(y) \right\Vert  \\ \leq 2 M \Vert q^{u+\eps \bar{u},\alpha}(y)- q^{u,\alpha}(y)-\eps \bar{q}(y) \Vert +M_4 \eps^2.
\end{multline}
Finally, using the previous inequality and Inequality \eqref{eq654}, we obtain for any $\vert \eps \vert < 1$ and any $t \in [a,b]$: 
\begin{equation}
\Vert q^{u+\eps \bar{u},\alpha}(t) - q^{u,\alpha}(t) - \eps \bar{q}(t) \Vert \leq 2 M I^\alpha_- \big( \Vert  q^{u+\eps \bar{u},\alpha} - q^{u,\alpha} - \eps \bar{q} \Vert \big) (t) +  M_5 \eps^2,
\end{equation}
where $M_5 := M_4 (b-a)^\alpha / \Gamma (1+\alpha)$ is independent of $\vert \eps \vert < 1$. Finally, from the fractional Gronwall's Lemma \ref{lemgronf}, we conclude that:
\begin{equation}
\forall \vert \eps \vert < 1, \; \forall t \in [a,b], \; \Vert q^{u+\eps \bar{u},\alpha}(t) - q^{u,\alpha}(t) - \eps \bar{q}(t)\Vert \leq  M_5 \eps^2 {\rm E}_{\alpha,1} \big( 2M(b-a)^\alpha \big).
\end{equation}
Defining $C := M_5 {\rm E}_{\alpha,1} \big( 2M (b-a)^\alpha \big)$, the proof is completed.
\end{proof}

\subsection{Proof of Lemma \ref{lem1}}\label{appAd}
In this section, we prove Lemma \ref{lem1} and consequently, we use the notations and definitions given in Section \ref{section22}. \\

Let $u$, $\bar{u} \in \CC^0 ([a,b],\R^m )$ and $\bar{q} \in \CC^{[\alpha]} ( [a,b], \R^d )$ the unique global solution of \eqref{eqlcpq}. From Lemma \ref{lemappA2}, we have:
\begin{equation}\label{eqAd-1}
\forall \vert \eps \vert < 1, \; q^{u+\eps \bar{u},\alpha} = q^{u,\alpha} + \eps \bar{q} + h^{\eps},
\end{equation}
where $\Vert h^{\eps} \Vert_{\infty} \leq C \eps^2$. In particular, since $\bar{q}$ is continuous, there exists $M_1 \geq 0$ such that:
\begin{equation}
\forall \vert \eps \vert < 1, \; \forall y \in [a,b], \; [q^{u,\alpha}(y),q^{u+\eps \bar{u},\alpha}(y)] \subset [-M_1,M_1]^d.
\end{equation}
In the same way, since $u$ and $\bar{u}$ are continuous, there exists $M_2 \geq 0$ such that:
\begin{equation}
\forall \vert \eps \vert < 1, \; \forall y \in [a,b], \; [u(y),u(y)+\eps \bar{u}(y)] \subset [-M_2,M_2]^m.
\end{equation}
We have:
\begin{equation}\label{eqAd-2}
\forall \vert \eps \vert < 1, \; \LL^\alpha(u+\eps \bar{u})-\LL^\alpha(u) = \di \int_a^b L(q^{u+\eps \bar{u},\alpha},u+\eps \bar{u},t) - L(q^{u,\alpha},u,t) \; dt .
\end{equation}
With a Taylor's expansion of order $2$ with explicit remainder, we have for any $\vert \eps \vert < 1$ and any $y \in [a,b]$:
\begin{multline}
\Big\vert L\big(q^{u+\eps \bar{u},\alpha}(y),u(y)+\eps \bar{u}(y),y\big) - L\big(q^{u,\alpha}(y),u(y),y\big) \\ - \eps \dfrac{\partial L}{\partial x} \big(q^{u,\alpha}(y),u(y),y\big) \cdot \bar{q}(y) - \eps \dfrac{\partial L}{\partial v} \big(q^{u,\alpha}(y),u(y),y\big) \cdot \bar{u}(y) \Big\vert \\ \leq \Big\vert \dfrac{\partial L}{\partial x} \big(q^{u,\alpha}(y),u(y),y\big) \cdot h^{\eps}(y) \Big\vert  + \Big\vert \dfrac{1}{2} \nabla^2 L \big(\xi^\eps_1(y),\xi^\eps_2(y),y\big) \big(\eps \bar{q}(y)+h^\eps(y),\eps \bar{u}(y),0\big)^2 \Big\vert,
\end{multline}
where for any $\abs{\eps} <1$ and any $y \in [a,b]$, $\xi^\eps_1 (y) \in [q^{u,\alpha}(y),q^{u+\eps \bar{u},\alpha}(y)] \subset [-M_1,M_1]^d$ and $\xi^\eps_2 (y) \in [u(y),u(y)+\eps \bar{u}(y)] \subset [-M_2,M_2]^m$. Since $L$ is of class $\CC^2$, we obtain easily that there exists $M_3 \geq 0$ such that for any $\vert \eps \vert < 1$ and any $y \in [a,b]$: 
\begin{multline}
\Big\vert L\big(q^{u+\eps \bar{u},\alpha}(y),u(y)+\eps \bar{u}(y),y\big) - L\big(q^{u,\alpha}(y),u(y),y\big) \\ - \eps \dfrac{\partial L}{\partial x} \big(q^{u,\alpha}(y),u(y),y\big) \cdot \bar{q}(y) - \eps \dfrac{\partial L}{\partial v} \big(q^{u,\alpha}(y),u(y),y\big) \cdot \bar{u}(y) \Big\vert \leq M_3 \eps^2.
\end{multline}
Consequently, we have for any $0 < \vert \eps \vert < 1$:
\begin{equation}
\left\Vert \dfrac{L(q^{u+\eps \bar{u},\alpha},u+\eps \bar{u},t) - L(q^{u,\alpha},u,t)}{\eps} - \dfrac{\partial L}{\partial x} (q^{u,\alpha},u,t) \cdot \bar{q} -  \dfrac{\partial L}{\partial v} (q^{u,\alpha},u,t) \cdot \bar{u} \right\Vert_{\infty} \leq M_3 \eps.
\end{equation}
Hence:
\begin{equation}
\lim\limits_{\eps \rightarrow 0} \dfrac{\LL^\alpha(u+\eps \bar{u})-\LL^\alpha(u)}{\eps} =  \di \int_a^b \dfrac{\partial L}{\partial x} (q^{u,\alpha},u,t) \cdot \bar{q} + \dfrac{\partial L}{\partial v} (q^{u,\alpha},u,t) \cdot \bar{u} \; dt.
\end{equation}
The proof is completed.

\bibliographystyle{plain}

\end{document}